\newcommand{\klockan}{\the\hours:{\ifnum\minutes<10 0\fi}\the\minutes}
\newcommand{\tid}{\today\ \klockan}
\newcommand{\prtid}{\smash{\raise 10mm \hbox{\LaTeX ed \tid}}}
\renewcommand{\prtid}{}
\def\sectionmark#1{} %\markboth{{\sectnr #1}}{{\sectnr #1}}} %Journal
\def\subsectionmark#1{}
\newcommand{\sectnr}{\ifnum \c@secnumdepth >\z@
                 \thesection.\hskip 1em\relax \fi}
\def\@evenhead{\footnotesize\rm\thepage\hfil\leftmark\hfil\llap{\prtid}}
\def\@oddhead{\footnotesize\rm\rlap{\prtid}\hfil\rightmark\hfil\thepage}
\def\tableofcontents{\section*{Contents} %\@mkboth{Contents}{Contents}} %Journal
 \@starttoc{toc}}
\def\@biblabel#1{#1.}
\let\Thebibliography=\thebibliography
\renewcommand{\thebibliography}[1]{\def\@mkboth##1##2{}\Thebibliography{#1}
\addcontentsline{toc}{section}{References}
\frenchspacing % Maybe not needed
% Deleting extra vertical space
\setlength{\@topsep}{0pt}% Delete if extra space before list
\setlength{\itemsep}{0pt}%
\setlength{\parskip}{0pt plus 2pt}%
}
\def\mdots@{\mathinner.\nonscript\!.%
 \ifx\next,.\else\ifx\next;.\else\ifx\next..\else
 \nonscript\!\mathinner.\fi\fi\fi}
\let\ldots\mdots@
\let\cdots\mdots@
\let\dotso\mdots@
\let\dotsb\mdots@
\let\dotsm\mdots@
\let\dotsc\mdots@
\def\vdots{\vbox{\baselineskip2.8\p@ \lineskiplimit\z@
    \kern6\p@\hbox{.}\hbox{.}\hbox{.}\kern3\p@}}
\def\ddots{\mathinner{\mkern1mu\raise8.6\p@\vbox{\kern7\p@\hbox{.}}%
    \raise5.8\p@\hbox{.}\raise3\p@\hbox{.}\mkern1mu}}
\let\Enumerate=\enumerate
\renewcommand{\enumerate}{\Enumerate%
% Deleting extra vertical space
\setlength{\@topsep}{0pt}% Delete if extra space before list
\setlength{\itemsep}{0pt}%
\setlength{\parskip}{0pt plus 1pt}%
\renewcommand{\theenumi}{\textup{(\alph{enumi})}}%
\renewcommand{\labelenumi}{\theenumi}%
}
\let\endEnumerate=\endenumerate
\renewcommand{\endenumerate}{\endEnumerate\unskip}
\def\@seccntformat#1{\csname the#1\endcsname.\quad}
\long\def\@makecaption#1#2{%
  \vskip\abovecaptionskip
  \sbox\@tempboxa{ #1. #2}%
  \ifdim \wd\@tempboxa >\hsize
    #1. #2\par
  \else
    \global \@minipagefalse
    \hb@xt@\hsize{\hfil\box\@tempboxa\hfil}%
  \fi
  \vskip\belowcaptionskip}
\newcommand{\authortitle}[2]{\author{#1}\title{#2}\markboth{#1}{#2}}
\newcommand{\auth}[2]{{#1, #2.}}
\newcommand{\art}[6]{{\sc #1, \rm #2, \it #3\/ \bf #4 \rm (#5), \mbox{#6}.}}
\newcommand{\artprep}[3]{{\sc #1, \rm #2, \rm #3.}}
\newcommand{\artin}[3]{{\sc #1, \rm #2,  in #3.}}
\newcommand{\arttoappear}[3]{{\sc #1, \rm #2, to appear in \it #3}}
\newcommand{\book}[3]{{\sc #1, \it #2, \rm #3.}}
\newcommand{\AND}{{\rm and }}
\newtheoremstyle{descriptive}%
  {\topsep}   %{\medskipamount}          % Space above
  {\topsep}   %  {\medskipamount}          % Space below
  {\rmfamily} % Body font
  {}          % Indent
  {\bfseries} % Head font
  {.}         % Punctuation after thm head
  { }         % Space after thm head
  {}          % Thm head spec(?)
\newtheoremstyle{propositional}%
  {\topsep}   %  {\medskipamount}          % Space above
  {\topsep}   %  {\medskipamount}          % Space below
  {\itshape}  % Body font
  {}          % Indent
  {\bfseries} % Head font
  {.}         % Punctuation after thm head
  { }         % Space after thm head
  {}          % Thm head spec(?)
\theoremstyle{propositional}
\newtheorem{thm}{Theorem}[section]
\newtheorem{prop}[thm]{Proposition}
\newtheorem{lem}[thm]{Lemma}
\newtheorem{theorem}[thm]{Theorem}
\newtheorem{cor}[thm]{Corollary}
\theoremstyle{descriptive}
\newtheorem{deff}[thm]{Definition}
\newtheorem{definition}[thm]{Definition}
\newtheorem{remark}[thm]{Remark}
\def\vint{\mathop{\mathchoice%
          {\setbox0\hbox{$\displaystyle\intop$}\kern 0.22\wd0%
           \vcenter{\hrule width 0.6\wd0}\kern -0.82\wd0}%
          {\setbox0\hbox{$\textstyle\intop$}\kern 0.2\wd0%
           \vcenter{\hrule width 0.6\wd0}\kern -0.8\wd0}%
          {\setbox0\hbox{$\scriptstyle\intop$}\kern 0.2\wd0%
           \vcenter{\hrule width 0.6\wd0}\kern -0.8\wd0}%
          {\setbox0\hbox{$\scriptscriptstyle\intop$}\kern 0.2\wd0%
           \vcenter{\hrule width 0.6\wd0}\kern -0.8\wd0}}%
          \mathopen{}\int}
{\catcode`p =12 \catcode`t =12 \gdef\eeaa#1pt{#1}}      % Get slantfactor
\def\accentadjtext#1{\setbox0\hbox{$#1$}\kern   % Convert it with height
                \expandafter\eeaa\the\fontdimen1\textfont1 \ht0 }
\def\accentadjscript#1{\setbox0\hbox{$#1$}\kern % Convert it with height
                \expandafter\eeaa\the\fontdimen1\scriptfont1 \ht0 }
\def\accentadjscriptscript#1{\setbox0\hbox{$#1$}\kern   % Convert it with height
                \expandafter\eeaa\the\fontdimen1\scriptscriptfont1 \ht0 }
\def\accentadjtextback#1{\setbox0\hbox{$#1$}\kern       % Convert it with height
                -\expandafter\eeaa\the\fontdimen1\textfont1 \ht0 }
\def\accentadjscriptback#1{\setbox0\hbox{$#1$}\kern     % Convert it with height
                -\expandafter\eeaa\the\fontdimen1\scriptfont1 \ht0 }
\def\accentadjscriptscriptback#1{\setbox0\hbox{$#1$}\kern % Convert it with height
                -\expandafter\eeaa\the\fontdimen1\scriptscriptfont1 \ht0 }
\def\itoverline#1{{\mathsurround0pt\mathchoice
        {\rlap{$\accentadjtext{\displaystyle #1}
                \accentadjtext{\vrule height1.593pt}
                \overline{\phantom{\displaystyle #1}
                \accentadjtextback{\displaystyle #1}}$}{#1}}
        {\rlap{$\accentadjtext{\textstyle #1}
                \accentadjtext{\vrule height1.593pt}
                \overline{\phantom{\textstyle #1}
                \accentadjtextback{\textstyle #1}}$}{#1}}
        {\rlap{$\accentadjscript{\scriptstyle #1}
                \accentadjscript{\vrule height1.593pt}
                \overline{\phantom{\scriptstyle #1}
                \accentadjscriptback{\scriptstyle #1}}$}{#1}}
        {\rlap{$\accentadjscriptscript{\scriptscriptstyle #1}
                \accentadjscriptscript{\vrule height1.593pt}
                \overline{\phantom{\scriptscriptstyle #1}
                \accentadjscriptscriptback{\scriptscriptstyle #1}}$}{#1}}}}
\def\itunderline#1{{\mathsurround0pt\mathchoice
        {\rlap{$\underline{\phantom{\displaystyle #1}
                \accentadjtextback{\displaystyle #1}}$}{#1}}
        {\rlap{$\underline{\phantom{\textstyle #1}
                \accentadjtextback{\textstyle #1}}$}{#1}}
        {\rlap{$\underline{\phantom{\scriptstyle #1}
                \accentadjscriptback{\scriptstyle #1}}$}{#1}}
        {\rlap{$\underline{\phantom{\scriptscriptstyle #1}
                \accentadjscriptscriptback{\scriptscriptstyle #1}}$}{#1}}}}
\numberwithin{equation}{section}
\newcommand{\p}{{$p\mspace{1mu}$}}
\newcommand{\R}{{\mathbf R}}
\DeclareMathOperator*{\essliminf}{ess\,lim\,inf}
\DeclareMathOperator{\Div}{div}
\newcommand{\eps}{{\varepsilon}}
\newcommand{\Rn}{\R^n}
\newcommand{\vp}{\varphi}
\renewcommand{\phi}{\varphi}
\newcommand{\la}{\lambda}
\newcommand{\La}{\Lambda}
\newcommand{\vt}{\tilde{v}}
\newcommand{\ut}{\tilde{u}}
\newcommand{\bdry}{\partial}
\newcommand{\bdy}{\bdry}
\newcommand{\bdyp}{\bdy_p}
\newcommand{\Rno}{\mathbf{R}^{n+1}}
\newcommand{\setm}{\setminus}
\newcommand{\uP}{\itoverline{P}}
\newcommand{\lP}{\itunderline{P}}
\newcommand{\de}{\delta}
\newcommand{\UU}{\mathcal{U}}
\newcommand{\LL}{\mathcal{L}}
\newcommand{\Thetaminus}{\Theta_-}
\newcommand{\Thetah}{\widehat{\Theta}}
\newcommand{\Thetatminus}{\widetilde{\Theta}_-}
\newcommand{\ga}{\gamma}
\newcommand{\al}{\alpha}
\newcommand{\alp}{\alpha}
\newcommand{\clV}{\overline{V}}
\newcommand{\lt}{\bigl|{\log |t|}\bigr|}
\newcommand{\ltsub}{|{\log |t|}|}
\renewcommand{\emptyset}{\varnothing}
\newcommand{\xh}{\hat{x}}
\newenvironment{ack}{\medskip{\it Acknowledgement.}}{}
\begin{document}

\authortitle{Anders Bj\"orn, Jana Bj\"orn and Mikko Parviainen}
{The tusk condition and Petrovski\u\i\ criterion 
for the normalized \p-parabolic equation}

\author{
Anders Bj\"orn \\
\it\small Department of Mathematics, Link\"oping University, \\
\it\small SE-581 83 Link\"oping, Sweden\/{\rm ;}
\it \small anders.bjorn@liu.se
\\
\\
Jana Bj\"orn \\
\it\small Department of Mathematics, Link\"oping University, \\
\it\small SE-581 83 Link\"oping, Sweden\/{\rm ;}
\it \small jana.bjorn@liu.se
\\
\\
Mikko Parviainen \\
\it\small Department of Mathematics and Statistics, University of Jyv\"askyl\"a,\\
\it\small P.O. Box 35\/  \textup{(}MaD\/\textup{)}, 
  FI-40014 Jyv\"askyl\"a, Finland\/{\rm ;}
\it\small mikko.j.parviainen@jyu.fi
}

\date{} 

\maketitle

\noindent{\small
{\bf Abstract}.
We study boundary regularity for 
the normalized \p-parabolic 
equation in arbitrary bounded domains.
Effros and Kazdan 
(\emph{Indiana Univ.\ Math.\ J.} {\bf 20} (1970), 683--693)
showed that the so-called
tusk condition guarantees regularity for the heat equation.
We generalize this result 
to the normalized \p-parabolic 
equation, and also obtain H\"older continuity.
The tusk condition is a parabolic version of the exterior cone condition.
We also obtain a sharp Petrovski\u\i\ criterion 
for the regularity of the latest moment of a domain. 
This criterion
implies that the regularity of a boundary point is 
affected if one side of the equation is multiplied by a constant.
}

\bigskip
\noindent
{\small \emph{Key words and phrases}: 
boundary regularity,
exterior ball condition, 
H\"older continuity up to  the boundary, 
normalized \p-parabolic equation,
parabolic comparison principle,
  pasting lemma,
Petrovski\u\i\ criterion, 
strong minimum principle,
tusk condition,
viscosity solution.

\medskip
\noindent
{\small Mathematics Subject Classification (2010):
Primary: 35K61,
Secondary: 35B30, 35B51,  35D40, 
35K92.
}
}

\section{Introduction}

Let $\Theta$ be a bounded open set in a Euclidean space
and for every $f \in C(\bdy \Theta)$
let $u_f$ be the solution of the Dirichlet problem for a given 
partial differential equation.
Then a boundary point $\xi_0 \in \bdy \Theta$ is \emph{regular} if
\[
             \lim_{\Theta \ni \zeta \to \xi_0} u_f(\zeta)=f(\xi_0)
             \quad \text{for all } f \in C(\bdy \Theta),
\]
i.e.\ if the solution to the Dirichlet problem attains
the given boundary data continuously.
In other words, the Dirichlet problem is solvable in the classical sense
if and only if all boundary points are regular,
in which case $\Theta$ is called \emph{regular}.
For solving the Dirichlet problem  in this context we use Perron solutions.

In this paper, we consider boundary regularity for the 
\emph{normalized \p-parabolic equation}
\begin{equation}
\label{eq:normalized-p-parabolic}
u_t= \Delta_p^N u,
\end{equation}
where  $1<p<\infty$ and, at least as long as $\nabla u\neq 0$, 
\begin{align*}
\Delta_p^N u & =\Delta u+(p-2)\Delta_\infty^N u=|\nabla u|^{2-p} \Delta_p u, \\
\Delta_\infty^N u& =|\nabla  u|^{-2}\langle D^2u \nabla u,\nabla u \rangle, \\
\Delta_p u &= \Div (|\nabla u|^{p-2} \nabla u).
\end{align*}
For $p=2$ both the normalized and the non-normalized \p-parabolic
equation 
\begin{equation}
\label{eq:p-parabolic}
u_t= \Delta_p u,
\end{equation}
reduce to the heat equation.

The normalized \p-parabolic equation
\eqref{eq:normalized-p-parabolic} has applications in, e.g.,
image processing, see Does~\cite{does11}, and arises
from tug-of-war games with noise,
see Manfredi--Parviainen--Rossi~\cite{manfredipr10c}.
Compared with \eqref{eq:p-parabolic}, it
has the advantage that solutions remain solutions when multiplied by constants.
On the other hand, it
is in nondivergence form and the
solutions are understood in the viscosity sense. 
Moreover, the normalized
\p-Laplacian $\Delta_p^N u$ is discontinuous at the zeros of the gradient
$\nabla u$. 

Boundary regularity for the normalized \p-parabolic equation 
\eqref{eq:normalized-p-parabolic}
was first studied by 
 Banerjee--Garofalo~\cite{banerjeeg15}, see also their earlier paper
\cite{banerjeeg13}.
More  recently Jin--Silvestre~\cite{jins} established the interior 
 $C^{1,\alp}$-regularity for solutions of \eqref{eq:normalized-p-parabolic}, 
see also   
Imbert--Jin--Silvestre~\cite{imbertjs}, Attouchi--Parviainen~\cite{attouchip}, and  Parviainen--Ruosteenoja~\cite{parviainenr16}  for related regularity results.

The following is our main result.

\begin{thm}
\label{thm-tusk-intro}
\textup{(The tusk condition)}
Assume that
$\Theta$ satisfies the tusk condition at $\xi_0=(0,0)$, i.e.\ 
there are $\xh \in \R^n$ and  $R,T>0$ such that the tusk
\[
\{(x,t)\in\R^{n+1}: -T<t<0 \text{ and } |x-(-t)^{1/2} \xh |^2 < R^2(-t)\} 
\subset \R^{n+1} \setm \Theta.
\]
Then $\xi_0$ is regular.
Moreover, if $f:\bdry\Theta\to\R$ is bounded and H\"older continuous 
at $\xi_0$ then so is the upper Perron solution $\uP f$.
\end{thm}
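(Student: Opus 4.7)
The plan is to construct a self-similar viscosity supersolution playing the role of a barrier at $\xi_0=(0,0)$ and then to invoke the Perron/comparison machinery developed earlier in the paper. Once one exhibits a positive viscosity supersolution $v$ in $\Theta\cap Q$ (with $Q$ a small parabolic cylinder about $\xi_0$) that satisfies $v(\zeta)\to 0$ as $\zeta\to\xi_0$ in $\Theta$, is bounded below by a positive constant on the remaining part of the parabolic boundary, and additionally obeys $v(\zeta)\le C\,d_p(\zeta,\xi_0)^{\gamma}$ for some $\gamma>0$ (where $d_p$ denotes the parabolic distance), the parabolic comparison principle yields both regularity of $\xi_0$ and, after reducing to the case $f(\xi_0)=0$ and optimizing the size of $Q$ against the H\"older exponent of $f$, the claimed H\"older continuity of $\uP f$ at $\xi_0$.

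Both equation~\eqref{eq:normalized-p-parabolic} and the tusk are preserved by the parabolic dilation $(x,t)\mapsto(\lambda x,\lambda^2 t)$, which dictates the self-similar ansatz
\[
v(x,t) = (-t)^{\beta}\phi(\xi),\qquad \xi := x/\sqrt{-t},
\]
for some $\beta>0$. Exploiting that $\Delta_p^N$ is $0$-homogeneous in the Hessian with time-scaling weight $2$, a direct computation reduces $v_t-\Delta_p^N v\ge 0$ to the stationary elliptic inequality
\[
L\phi := \Delta_p^N\phi(\xi) - \tfrac12\,\xi\cdot\nabla\phi(\xi) + \beta\,\phi(\xi) \le 0 \quad\text{on }\R^n\setm \overline{B(\xh,R)},
\]
since the tusk in self-similar coordinates is the fixed ball $B(\xh,R)$.

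Producing such a $\phi>0$ on $\R^n\setm \overline{B(\xh,R)}$ is the heart of the argument. A natural guess is $\phi(\xi) = (|\xi|^2+c)^{\beta}$ for a suitable constant $c$: for this choice the drift term $-\tfrac12\xi\cdot\nabla\phi$ and the zeroth-order term $\beta\phi$ combine cleanly, so that after direct computation $L\phi$ reduces to an expression whose sign is dictated primarily by $\Delta_p^N\phi$. For $\beta\in(0,\beta_0)$ with $\beta_0=\beta_0(n,p,R,|\xh|)$ sufficiently small, one should then be able to choose $c$ so that $\phi>0$ exactly on $\R^n\setm \overline{B(\xh,R)}$ and $L\phi\le 0$ there, possibly after adding lower-order correction terms to absorb errors arising from the off-center location of $\xh$. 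Transferring back, the barrier satisfies $v(\zeta)\le C\,d_p(\zeta,\xi_0)^{2\beta}$, which supplies the H\"older control.

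The main obstacle is the explicit construction and verification in the third paragraph: the first-order drift $-\tfrac12\xi\cdot\nabla\phi$ has linear growth and combines in sign with $\beta\phi$, so the only way $L\phi\le 0$ can hold is through a precise cancellation of these two terms, leaving only $\Delta_p^N\phi$ to verify. This forces $\beta$ to be small and makes the analysis sensitive to the asymmetry introduced by $\xh\ne 0$. Two further technicalities are ensuring $\nabla\phi\ne 0$ on the domain so that $\Delta_p^N\phi$ is defined classically (making the viscosity-supersolution property immediate), and verifying the comparison/pasting arguments within the viscosity framework announced in the abstract.
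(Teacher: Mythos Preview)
Your self-similar reduction to the stationary inequality $L\phi=\Delta_p^N\phi-\tfrac12\,\xi\cdot\nabla\phi+\beta\phi\le0$ on $\R^n\setminus\overline{B(\hat{x},R)}$ is correct, but the construction of $\phi$ --- which you yourself flag as the main obstacle --- is a genuine gap, not a detail. Your candidate $\phi(\xi)=(|\xi|^2+c)^\beta$ is centered at the origin, so its zero set can only be $\partial B(0,\sqrt{-c})$, never $\partial B(\hat{x},R)$ for $\hat{x}\ne0$; and the clean cancellation between $-\tfrac12\,\xi\cdot\nabla\phi$ and $\beta\phi$ on which you rely is destroyed the moment you recenter. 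Even in the favourable case $\hat{x}=0$, $c=-R^2$, a direct computation gives
\[
L\phi = \beta(|\xi|^2-R^2)^{\beta-2}\bigl[(|\xi|^2-R^2)\bigl(2(n+p-2)-R^2\bigr)+4(\beta-1)(p-1)|\xi|^2\bigr],
\]
and the bracket --- linear in $|\xi|^2$ --- is positive for large $|\xi|$ unless $R^2\ge 2(n-p)+4\beta(p-1)$. Thus for $p<n$ and small $R$ the inequality fails near infinity; since $R$ is invariant under parabolic scaling this cannot be rescaled away, and ``lower-order corrections'' do not help because the obstruction is in the leading-order behaviour at infinity, not near the tusk.

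The paper sidesteps the explicit construction entirely. The barrier is \emph{defined} as the upper Perron solution $u=\uP_{\Theta_0}f$ on a bounded auxiliary domain $\Theta_0=\widehat{\Theta}\setminus\overline{V}$ (a parabolic ``house'' with the tusk removed), with $f=-t$ on $\partial V$ and $f=1$ elsewhere. The exterior ball condition --- established first with an explicit exponential barrier --- makes every point of $\partial\Theta_0\setminus\{\xi_0\}$ regular, and the strong minimum principle then forces $0<u<1$ in $\Theta_0$. Scaling invariance is exploited through comparison rather than through an exact ansatz: since $v(x,t):=u(2x,4t)$ is again a solution on the half-scaled domain $\Theta_1$, equals $1$ on the new outer boundary where $u\le\alpha_1<1$ by compactness, and equals $4u$ on the tusk boundary, the parabolic comparison principle gives $u\le\alpha v$ in $\Theta_1$ with $\alpha=\max\{\alpha_1,\tfrac14\}<1$. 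Iterating yields $u\le\alpha^k$ on the $k$th dyadic shell, hence H\"older decay at $\xi_0$. The key idea you are missing is to let the equation manufacture the barrier on a bounded domain and then extract the decay rate \emph{a posteriori} from scaling and the strong minimum principle, rather than seeking a global self-similar profile in closed form.
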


For the heat equation, Effros--Kazdan~\cite{effrosk71} 
showed that the very same tusk condition implies boundary regularity
(but without H\"older continuity part)
and Lieberman~\cite{lieberman89} generalized this
to linear uniformly parabolic equations.

Our proof is based on the strong minimum principle and 
the fact that the shape of the tusk is invariant under parabolic scaling.
Compared with the proof in \cite{effrosk71} we do not
have their removability Lemma~1 at our disposal, and instead
we use the parabolic comparison principle.
We also need to first deduce the strong minimum 
principle, the parabolic comparison principle
and show that the exterior ball condition guarantees boundary regularity.
The exterior ball condition is much more restrictive
than the tusk condition, but it is needed in our proof.
We also improve on the earlier results (including
the heat equation) by showing the H\"older continuity at
boundary points with an external tusk.

In this paper, we also deduce the following generalization of 
Petrovski\u\i's  criterion, which for the heat equation
was proved in~\cite{petrovskii35}.

\begin{thm} \label{thm-Petr-intro}
\textup{(The Petrovski\u\i\  criterion)}
Let 
\[
   \Theta :=\{(x,t) : |x|^2 <  A|t| \log {\lt}
   \text{ and } -\tfrac13 < t < 0\},
\]
where $A>0$.
Then $\xi_0=(0,0)$ is regular if and only if
$A \le 4(p-1)$.
\end{thm}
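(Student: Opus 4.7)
The plan is to reduce the problem to a linear parabolic equation via radial symmetry, scale away the coefficient $(p-1)$, and then adapt the classical Petrovski\u\i\ argument for the heat equation.

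\emph{Step 1 (radial reduction).} Both $\Theta$ and the equation are rotationally invariant in $x$, so it suffices to work with radial sub- and super\-solutions $u(x,t)=\phi(|x|,t)$. For such $u$, $\nabla u=\phi_r\hat x$ is parallel to $x$, hence $|\nabla u|^{-2}\langle D^2u\nabla u,\nabla u\rangle=\phi_{rr}$ and
\[
  \Delta_p^N u=(p-1)\phi_{rr}+\frac{n-1}{r}\phi_r.
\]
Thus \eqref{eq:normalized-p-parabolic} restricted to radial profiles is the \emph{linear} parabolic equation $\phi_t=(p-1)\phi_{rr}+(n-1)r^{-1}\phi_r$, whose self-similar ``heat kernel'' is
\[
  \Phi(x,t)=|t|^{-(n+p-2)/(2(p-1))}\,e^{-|x|^2/(4(p-1)|t|)}.
\]
The rescaling $\tilde x=x/\sqrt{p-1}$ turns this into the radial form of the heat equation and transforms $\Theta$ into a Petrovski\u\i-type domain with parameter $\tilde A=A/(p-1)$, so that $\tilde A\le 4$ if and only if $A\le 4(p-1)$. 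This is the source of the $(p-1)$ in the threshold.

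\emph{Step 2 (sufficiency: $A\le 4(p-1)\Rightarrow$ regularity).} I would construct a Petrovski\u\i-type barrier of the shape
\[
  w(x,t)=(-t)^{-\alpha}\bigl(Q(|x|^2/(-t))+K\log|{\log(-t)}|\bigr)e^{-|x|^2/(4(p-1)(-t))},
\]
with $Q$ a first-degree polynomial and suitable constants $\alpha,K>0$. Because $w$ is radial and $C^2$, the computation in Step~1 shows that (for small $-t$ and appropriately chosen parameters) $w$ is a supersolution of \eqref{eq:normalized-p-parabolic}. On the lateral parabola $|x|^2=A|t|\log|{\log|t|}|$ the exponential equals $|{\log|t|}|^{-A/(4(p-1))}$; the hypothesis $A\le 4(p-1)$ ensures this decays no faster than $|{\log|t|}|^{-1}$, so $w\ge 0$ there for $K$ large. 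Moreover $w\to 0$ as $(x,t)\to\xi_0$. Pasting with the exterior-ball barriers already constructed in the paper turns $w$ into a global barrier at $\xi_0$, whence $\xi_0$ is regular by the usual argument using the strong minimum principle.

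\emph{Step 3 (necessity: $A>4(p-1)\Rightarrow$ irregularity).} I would exhibit a continuous boundary datum $f$ on $\bdy\Theta$ whose upper Perron solution $\uP f$ does not attain $f(\xi_0)$. The candidate is a radial subsolution $v$ of the same Gaussian--polynomial--logarithm type, with parameters tuned so that: (i)~$v$ extends continuously to $\bdy\Theta\setm\{\xi_0\}$ with $v\to 0$ on the lateral parabola as $t\to 0^-$ (here $A>4(p-1)$ is precisely what makes the exponential beat the logarithmic factors), while (ii)~along $\{x=0,\ t\to 0^-\}$ the function $v$ tends to a strictly positive limit $c_0$. Setting $f:=v|_{\bdy\Theta}$ with $f(\xi_0):=0$, the parabolic comparison principle yields $\uP f\ge v$ in $\Theta$, and therefore $\liminf_{\zeta\to\xi_0}\uP f(\zeta)\ge c_0>0=f(\xi_0)$, so $\xi_0$ is irregular.

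\emph{Main obstacle.} The principal difficulty is verifying the viscosity super/sub\-solution inequalities for the radial candidates along the singular axis $\{x=0\}$, where $\nabla w=0$ and $\Delta_p^N$ is \emph{a priori} undefined. Since the barriers are $C^2$ and radial, their Hessian on the axis is a scalar multiple of the identity, so one can either test by smooth functions with nonvanishing gradient and pass to the limit, or argue directly through the radial reduction of Step~1. A secondary but genuinely delicate issue is the fine tuning of the logarithmic correction in the critical regime $A=4(p-1)$, where the exponential decay precisely balances the $(-t)^{-\alpha}$ factor on the lateral parabola and the sign of $w$ near $\xi_0$ is controlled only by sublogarithmic terms.
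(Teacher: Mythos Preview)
Your overall strategy---explicit radial barriers together with the barrier characterization---is the same as the paper's, and the radial reduction in Step~1 is a clean way to see where the constant $4(p-1)$ comes from. Note, however, that the rescaling $\tilde x=x/\sqrt{p-1}$ turns the radial operator into $\phi_t=\phi_{\tilde r\tilde r}+\frac{n-1}{p-1}\,\tilde r^{-1}\phi_{\tilde r}$, i.e.\ the radial heat equation in the generally non-integer ``dimension'' $1+(n-1)/(p-1)$; the Petrovski\u\i\ threshold is dimension-free so the heuristic survives, but you cannot simply quote the heat-equation result and must redo the barrier calculus anyway.

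The genuine gap is the barrier in Step~2. With $\alpha,K>0$, the function you propose does \emph{not} tend to $0$ at $\xi_0$: along $\{x=0\}$,
\[
w(0,t)=(-t)^{-\alpha}\bigl(Q(0)+K\log\lt\bigr)\to+\infty \quad\text{as }t\to0^-,
\]
so condition~(ii) of the barrier definition fails. Moreover, your claim that ``$A\le4(p-1)$ ensures $w\ge0$ on the lateral parabola'' misplaces the role of the threshold: in your form the exponential factor is always positive, so the sign of $w$ on the lateral boundary does not depend on $A$ at all. The paper's barrier has a quite different shape, built from the \emph{growing} exponential
\[
u(x,t)=-\lt^{-a-1}\,e^{+|x|^2/(k|t|)}+2\lt^{-a},\qquad k=4(p-1),\ a=\tfrac{n+p-2}{k}.
\]
Here $u(0,t)=\lt^{-a}\to0$, while on the lateral parabola $|x|^2=A|t|\log\lt$ the exponential equals $\lt^{A/k}\le\lt$ (this is precisely where $A\le k$ enters), giving $u\ge-\lt^{-a}+2\lt^{-a}>0$. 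The supersolution property is verified directly via Proposition~\ref{prop-classical}; your ``main obstacle'' at $\{x=0\}$ dissolves because there $\nabla u=0$ but $D^2u$ is \emph{negative} definite, so the viscosity requirement is vacuous---no pasting with exterior-ball barriers and no strong minimum principle are needed. Step~3 in the paper follows the logic you sketch (comparison with a subparabolic function whose axial and lateral limits differ), but again with a function of the same $-f(t)e^{|x|^2/(k|t|)}+h(t)$ shape, now with $4(p-1)<k<A$, rather than a decaying-Gaussian profile.
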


As in the case of the heat equation, it follows directly from the 
Petrovski\u\i\  criterion that regularity is different
for the normalized \p-parabolic equation~\eqref{eq:normalized-p-parabolic}
and for its multiplied cousins
\[
    a u_t= \Delta_p^N u, \quad a>0, \ a \ne 1.
\] 
This is in great contrast to the situation for 
the non-normalized \p-parabolic equation~\eqref{eq:p-parabolic}, 
with $p \ne 2$, for which it was shown by
Bj\"orn--Bj\"orn--Gianazza--Par\-vi\-ain\-en~\cite[Theorem~3.6]{BBGP}
that it and all its cousins have the same regular points.

The natural parabolic scaling for 
\eqref{eq:p-parabolic}
takes on a different form than for 
\eqref{eq:normalized-p-parabolic}
and the heat equation (both of which are 
invariant under the same parabolic scaling), 
see Bj\"orn--Bj\"orn--Gianazza~\cite[Section~4]{BBG}.
This is one reason for why the Petrovski\u\i\  criterion for 
\eqref{eq:p-parabolic},
obtained in  \cite[Theorem~1.1]{BBG}, 
looks quite different from Theorem~\ref{thm-Petr-intro} above.
In particular, the constant $A$ in the Petrovski\u\i\ criterion 
for~\eqref{eq:p-parabolic} is unimportant and instead it is the power
of $|x|$ that determines the regularity.
Moreover, it follows from that result
that the tusk condition (corresponding to the natural parabolic
scaling) does not imply regularity for \eqref{eq:p-parabolic}, at least not
for $p>2$.

A key tool for all these boundary regularity results is
the barrier characterization saying that a boundary point is regular if
and only if it has a barrier, 
or a family of barriers in the case of \eqref{eq:p-parabolic}.
For 
\eqref{eq:normalized-p-parabolic},
this   characterization was obtained by 
Banerjee--Garofalo~\cite[Theorem~4.5]{banerjeeg15},
while for the heat equation it is due to 
Bauer~\cite[Theorems~30 and~31]{Bauer62}.
On the contrary, 
regularity for \eqref{eq:p-parabolic} is characterized by a family of 
barriers~\cite[Theorem~3.3]{BBGP}, 
while one (traditional) barrier is not enough  by \cite[Proposition~1.2]{BBG},
at least not for $p<2$.

Thus it seems that the boundary regularity theory for
the normalized \p-parabolic equation 
\eqref{eq:normalized-p-parabolic} is much more
similar to the theory for the heat equation than for
the non-normalized \p-parabolic equation \eqref{eq:p-parabolic}.
However, this is not the complete picture.
The main result in Banerjee--Garofalo~\cite[Theorem~1.1]{banerjeeg15}
says that a lateral point $(x_0,t_0)$ of a
space-time cylinder $G_{t_1,t_2}:= G \times (t_1,t_2) \subset \R^{n+1}$
(i.e.\ $x_0 \in \bdy G$ and $t_1 \le t_0 \le t_2$) is
regular for 
\eqref{eq:normalized-p-parabolic}
if and only if $x_0$ is regular for
\p-harmonic functions in $G$, when $p \ge 2$.
The very same criterion was obtained
for \eqref{eq:p-parabolic}
by Kilpel\"ainen--Lindqvist~\cite{KiLi96}
and Bj\"orn--Bj\"orn--Gianazza--Par\-vi\-ain\-en~\cite[Theorem~3.9]{BBGP},
for all $1<p<\infty$.

The boundary regularity theory for the heat equation has a long 
and colourful history, see e.g.\ Watson~\cite{watson12}. 
Since we have not been able to find a suitable reference
containing all the details mentioned below, we state them here.
Petrovski\u\i's criterion~\cite{petrovskii35} dates back to 1935.
Soon afterwards, 
Tikhonov~\cite[Theorems~1--3]{Tikhonov38} in 1938, showed that the parabolic
boundary of a cylinder $G\times(t_1,t_2)$ is regular for the heat equation 
if and only if $G$ is regular for harmonic functions. 
That a specific point on the lateral boundary of a cylinder is regular if and only if
the corresponding base point is regular for harmonic functions
was shown by Babu\v{s}ka--V\'yborn\'y~\cite{BabVyb} in 1962.
They proved a barrier
characterization in cylinders, and used it to prove their
regularity result.
The same year, Bauer~\cite[Theorems~30 and~31]{Bauer62}
obtained the general barrier characterization for 
the heat equation.

Evans--Gariepy~\cite{evansg82} obtained
the Wiener criterion for the heat equation,
and Fabes--Garofalo--Lanconelli~\cite{FaGaLa}
generalized this to linear uniformly parabolic
equations with $C^1$-Dini coefficients.
A different type of Wiener criterion for the heat equation has also been obtained 
by Landis~\cite{landis69}, \cite{landis91}.
However, contrary to the (linear and nonlinear) elliptic case, where
the Wiener criterion is really useful to deduce (ir)regularity, 
the parabolic Wiener criterion seems to be much harder to use in practice,
and the preferred way  of  deducing (ir)regularity
is using barriers, at least in most situations.

In a very recent preprint, Ubostad~\cite{ubostad} independently 
studies the Perron method and the
Petrovski\u\i's criterion for the multiplied equation
$u_t=p^{-1}\Delta_p^N u$.

\begin{ack}
The first two authors 
were supported by the Swedish Research Council, 
grants 621-2014-3974 and 2016-03424. 
The third author was partly supported by the Academy of Finland project \#260791.
This project started in the fall of 2015, and was in particular
conducted during several visits of the authors to Jyv\"askyl\"a resp.\
Link\"oping  in 2015--16. We thank these institutions for their kind hospitality.
\end{ack}

\section{Preliminaries}

Let  $\Theta\subset \R^{n+1}$ be an open  set  
and  $1<p<\infty$.
Points in $\R^{n+1}$ are written as $\xi=(x,t)$.
We let $\nabla u$ denote the gradient of $u$ in the space directions,
while $D^2 u$ is the matrix of all second derivatives in the space directions.
Also the operators $\Delta_p$ and $\Delta^N_p$ are considered with respect
to the space variable $x$.

Next we recall the definition of a viscosity (super/sub)solution 
to \eqref{eq:normalized-p-parabolic}. 
If the gradient of a test function vanishes, then we need to specify how 
to interpret the equation. To this end, we use the standard way, see 
Giga~\cite{giga06} and Crandall--Ishii--Lions~\cite{crandallil92}, 
of replacing the operator by its lower/upper semicontinuous envelope.   

\begin{definition}  \label{deff-visc-sol}
A function $u:\Theta\rightarrow (-\infty,\infty]$
is a \emph{viscosity supersolution} to  \eqref{eq:normalized-p-parabolic}   
in $\Theta$ if
\begin{enumerate}
\renewcommand{\theenumi}{\textup{(\roman{enumi})}}%
\item $u$ is lower semicontinuous;
\item $u$ is finite in a dense subset of ${\Theta}$;
\item for all $(x_0,t_0)\in\Theta $  and $\varphi\in C^2(\Theta)$,
such that
  $u(x_0, t_0) = \vp(x_0, t_0)$  and $u(x, t) > \varphi(x, t)$ for 
$(x, t) \in \Theta \setm \{(x_0,t_0)\}$,
 we have 
\[
\begin{cases}
\phi_t(x_0,t_0)-\Delta_p^N \varphi(x_0,t_0)\geq 0, &
		\text{if } \nabla \varphi(x_0,t_0)\neq 0,\\
\phi_t(x_0,t_0)-\Delta\phi(x_0,t_0)-(p-2)\lambda_{\text{eig}}
\geq 0,
		&\text{if } \nabla \varphi(x_0,t_0)=0,
\end{cases}
\]
\end{enumerate}
\medskip % Needed because of the display ending enumerate
where $\lambda_{\text{eig}}$ denotes the smallest (if $p \ge 2$)
or the largest (if $1<p<2$)
eigenvalue of the matrix $D^2\varphi(x_0,t_0)$.

A function $u$ is a \emph{viscosity subsolution} 
if $-u$ is a viscosity supersolution,
and a \emph{viscosity solution} if it is both a viscosity sub- and
		supersolution.
\end{definition}

We sometimes 
briefly refer to the conditions above for the test function 
$\vp$ by saying that $\vp$ \emph{touches} $u$ at $(x_0,t_0)$ from below. 

Since, for every $\eta\in\Rn$, the inner product
$|\eta|^{-2}\langle D^2u((x_0,t_0))\eta,\eta\rangle$
lies between the smallest and the largest eigenvalue 
of $D^2u((x_0,t_0))$, this definition is equivalent to the ones in
Jin--Silvestre~\cite[Definition~2.8]{jins} and
Chen--Giga--Goto~\cite[Definition~2.1]{chengg91}.
By Lemma~2 in Manfredi--Parviainen--Rossi~\cite{manfredipr10c}, 
we may also reduce the number of test functions in the definition 
of the viscosity super/subsolutions.
More precisely, condition~(iii) in
Definition~\ref{deff-visc-sol} can equivalently be replaced by

\medskip

\begin{enumerate}
\item[(iii$'$)] for all $(x_0,t_0)\in\Theta $  and 
$\varphi\in C^2(\Theta)$, such that
  $u(x_0, t_0) = \vp(x_0, t_0)$ and $u(x, t) > \varphi(x, t)$ for 
$(x, t) \in \Theta \setm \{(x_0,t_0)\}$,
 we have 
\[ 
\begin{cases}
\varphi_t(x_0,t_0)-\Delta_p^N \varphi(x_0,t_0)\geq 0,&
		\text{if } \nabla\varphi(x_0,t_0)\neq 0,\\
\varphi_t(x_0,t_0)\ge 0, &\text{if } \nabla\varphi(x_0,t_0)=0 \text{ and } 
D^2 \vp(x_0,t_0)=0,
\end{cases}
\] 
while there is no requirement when $\nabla\varphi(x_0,t_0)=0$ 
and $D^2\vp(x_0,t_0)\ne0$.
\end{enumerate}

\medskip

This definition is the same as 
Definition~2.2 in Banerjee--Garofalo~\cite{banerjeeg15}, 
except that we do not require $u\in L^\infty$ and we allow for arbitrary
domains $\Theta\subset\R^{n+1}$, not only cylinders.
With this modification, it will be possible to obtain a full equivalence
with superparabolic functions. 
This definition is also more common in the literature.
Many of the specific supersolutions considered in this paper will
be
smooth enough to be checked by the following criterion, which may be
of independent interest.

\begin{prop} \label{prop-classical}
Assume that $u \in C^2(\Theta)$, and that for every
$(x_0,t_0) \in \Theta$ we have
\[
\begin{cases}
u_t(x_0,t_0)-\Delta_p^N u(x_0,t_0)\geq 0,&
		\text{if } \nabla u(x_0,t_0)\neq 0,\\
u_t(x_0,t_0)\ge 0, &\text{if } \nabla u(x_0,t_0)=0 
    \text{ and }  D^2 u(x_0,t_0) \ge 0,
\end{cases}
\]
then $u$ is a viscosity supersolution in $\Theta$.
\end{prop}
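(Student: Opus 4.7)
The plan is to verify the simpler condition (iii$'$), which was shown to be equivalent to (iii). Conditions (i) and (ii) are immediate from $u\in C^2(\Theta)$, so only (iii$'$) requires attention. Let $\vp\in C^2(\Theta)$ touch $u$ from below at $(x_0,t_0)$, i.e.\ $u-\vp$ attains a strict minimum there. Since $u$ is also $C^2$, the standard calculus comparisons at $(x_0,t_0)$ read
\[
  \vp_t(x_0,t_0)=u_t(x_0,t_0),\qquad \nabla\vp(x_0,t_0)=\nabla u(x_0,t_0),\qquad
  D^2\vp(x_0,t_0)\le D^2u(x_0,t_0).
\]
These are the only facts about the test function I would use.

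For the case $\nabla\vp(x_0,t_0)\ne0$, I would rewrite the normalized \p-Laplacian in the trace form
\[
  \Delta_p^N w=\operatorname{tr}\bigl(A(\nabla w)\,D^2w\bigr),\qquad
  A(\eta):=I+(p-2)\frac{\eta\otimes\eta}{|\eta|^2}.
\]
The matrix $A(\eta)$ has eigenvalues $1$ (multiplicity $n-1$) and $p-1$, so it is symmetric and positive semidefinite for every $p>1$. Since $\nabla\vp=\nabla u$ at $(x_0,t_0)$, the ``coefficient matrix'' $A(\nabla\vp(x_0,t_0))=A(\nabla u(x_0,t_0))$ is the same for $\vp$ and $u$. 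The monotonicity $\operatorname{tr}(AX)\le\operatorname{tr}(AY)$ for $X\le Y$ and $A\ge0$ then yields
\[
  \Delta_p^N\vp(x_0,t_0)\le \Delta_p^N u(x_0,t_0).
\]
Combined with $\vp_t=u_t$ and the standing hypothesis $u_t-\Delta_p^N u\ge0$ at points where $\nabla u\ne0$, this gives $\vp_t(x_0,t_0)-\Delta_p^N\vp(x_0,t_0)\ge0$, as required.

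For the degenerate case $\nabla\vp(x_0,t_0)=0$ and $D^2\vp(x_0,t_0)=0$, the comparisons force $\nabla u(x_0,t_0)=0$ and $D^2u(x_0,t_0)\ge0$. The second clause of the hypothesis then says $u_t(x_0,t_0)\ge0$, and since $\vp_t=u_t$ we obtain $\vp_t(x_0,t_0)\ge0$. No other case appears in (iii$'$), so the proof concludes there.

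The only subtle point is the singular range $1<p<2$, where $p-2<0$ might suggest the inequality between the $\Delta_p^N$ of $\vp$ and $u$ could reverse. The trace-form observation above neutralizes this: since $p>1$, the coefficient matrix $A(\eta)$ is positive semidefinite regardless of the sign of $p-2$, so the monotonicity argument works uniformly in $p$. This is really the main content of the proposition; everything else is a direct unwinding of definitions.
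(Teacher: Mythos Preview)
Your proof is correct and follows the same route as the paper's: verify (iii$'$) using the first- and second-order relations $\vp_t=u_t$, $\nabla\vp=\nabla u$, $D^2\vp\le D^2u$ at the touching point, and then use ellipticity to compare $\Delta_p^N\vp$ with $\Delta_p^N u$ when $\nabla\vp\ne0$. The only cosmetic difference is that the paper carries out the ellipticity step by an explicit eigenvalue estimate on $D^2(u-\vp)$, whereas your trace-form argument with $A(\eta)=I+(p-2)\eta\otimes\eta/|\eta|^2\ge0$ is a slightly cleaner packaging of the same inequality and makes the uniformity in $p>1$ more transparent.
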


Note that there is no requirement if $\nabla u(x_0,t_0)=0$
and $D^2 u(x_0,t_0) \not \ge 0$.
As usual, we say that $D^2 u(x_0,t_0) \ge 0$ if it is a positive
definite or positive semidefinite quadratic form.

\begin{proof}
Let $\vp$ touch $u$ at $(x_0,t_0)$ from below as in (iii$'$).
Then 
\begin{equation} \label{eq-classical}
\nabla \phi(x_0,t_0) = \nabla u(x_0,t_0),
\
\phi_t(x_0,t_0)=u_t(x_0,t_0)
\text{ and } 
D^2 (u-\phi)(x_0,t_0) \ge 0.
\end{equation}

Assume first that  
$\nabla \phi(x_0,t_0) \ne 0$.
Let 
\[
\eta
= \frac{\nabla \phi(x_0,t_0)}{|\nabla \phi(x_0,t_0)|}
= \frac{\nabla u(x_0,t_0)}{|\nabla u(x_0,t_0)|}
\]
and let $0 \le \la_1  \le \ldots \le \la_n$ be 
the eigenvalues of the positive (semi)definite
matrix $D^2(u -\phi)(x_0,t_0)$.
Then
\begin{align*}
 \Delta_p^N u (x_0,t_0)-\Delta_p^N  \vp(x_0,t_0) 
& =\Delta (u-\phi)(x_0,t_0)
   +(p-2)\langle (D^2 (u-\phi)(x_0,t_0) \eta, \eta \rangle\\
& \ge \sum_{i=1}^n\lambda_i-\lambda_{n}
   +(p-1)\langle D^2 (u-\phi)(x_0,t_0) \eta, \eta \rangle\\
& =\sum_{i=1}^{n-1}\lambda_i
   +(p-1)\langle D^2 (u-\phi)(x_0,t_0) \eta, \eta\rangle \\
& \ge 0,
\end{align*}
and hence
\[ 
\vp_t(x_0,t_0)-\Delta_p^N \vp(x_0,t_0)
\ge u_t(x_0,t_0)-\Delta_p^N u (x_0,t_0)
\ge 0.
\] 

Assume now, on the other hand, that $\nabla \vp(x_0,t_0)=0$ and 
$D^2 \vp(x_0,t_0)=0$.
Then by \eqref{eq-classical}, $D^2 u(x_0,t_0) \ge 0$
and thus, using \eqref{eq-classical} again,
\[
      \phi_t(x_0,t_0)=u_t(x_0,t_0) \ge 0,
\]
by the assumption.
We  see that (iii$'$) is fulfilled, and therefore 
$u$ is a viscosity supersolution in $\Theta$.
\end{proof}

Note that viscosity (super)solutions are more precisely defined
than weak supersolutions used in connection with divergence-type operators,
which can be changed arbitrarily on sets of measure zero.
In fact, Proposition~\ref{prop-essliminf-reg} below implies that
if we change a viscosity supersolution at a single point, 
it will never remain to be a viscosity supersolution.

The equation \eqref{eq:normalized-p-parabolic} satisfies
some important invariance properties under multiplication, 
addition and parabolic scaling.
More precisely, 
if $u(x,t)$ is a viscosity (super/sub)solution 
and $a,\la>0$ and $b \in \R$, then so is
\[
v(x,t):=au(\la x, \la^2 t)+b.
\]

The following weak Harnack inequality for viscosity supersolutions
 can be extracted from
e.g.\ Wang~\cite[Corollary~4.14]{wang92}, 
Imbert--Silvestre~\cite[Theorem~4.15]{imberts13} 
or Jin--Silvestre~\cite[Theorem~2.1]{jins}, upon noting that
the normalized \p-Laplace operator in \eqref{eq:normalized-p-parabolic} 
satisfies the assumptions therein.
Indeed, 
\[
\Delta_p^N u = \sum_{i,j=1}^n a_{ij}(x) \bdy_{ij}u(x),
\]
where
\[
a_{ij}(x):= \de_{ij} + (p-2) \frac{\bdy_i u(x)\,\bdy_j u(x)}{|\nabla u(x)|^2}
\]
satisfy, for every $\eta\in \Rn$ with $|\eta|=1$,
\[
\sum_{i,j=1}^n a_{ij}(x) \eta_i\eta_j 
= |\eta|^2 + (p-2) \frac{\langle\nabla u(x),\eta\rangle^2}{|\nabla u(x)|^2},
\]
which clearly lies between $\la=\min\{p-1,1\}$ and $\La=\max\{p-1,1\}$.
This means that $\Delta_p^N$ can be estimated from above and below
by so-called Pucci operators, cf.~\cite[pp.\ 3--4]{jins} or 
\cite[Lemmas~3.2, 3.9 and Proposition~3.10]{wang92}.

We state the weak Harnack inequality using the space-time cylinders
\[
B_r \times (-r^2,0) \quad \text{and} \quad
B_r \times (-4r^2, -3r^2),
\]
contained in $B_{2r}\times (-4r^2,0)$, 
where $B_r=\{x\in\Rn:|x|<r\}$ are balls in $\Rn$.
These are easily obtained from the cylinders
\[
B_{1/2} \times \bigl(-\tfrac14,0\bigr), 
\quad B_{1/2} \times \bigl(-1, -\tfrac34\bigr) 
\quad \text{and} \quad B_1 \times (-1,0),
\]
used in the weak Harnack inequality in
\cite[Theorem~2.1]{jins}, by the solution-preserving
dilation $(x,t)\mapsto(2rx,(2r)^2t)$.
When $p\ge2$, the weak Harnack inequality also follows by a game-theoretic 
argument, see  Parviainen--Ruosteenoja~\cite[Theorem~4.7]{parviainenr16}.
Here and below, $\vint$ denotes the integral average, 
i.e.\ $\vint_A f \, d\mu=\int f\,d\mu/\mu(A)$.

\begin{thm}
\textup{(Weak Harnack inequality)}
 \label{thm-weak-Harnack-superpara}
Let $u$ be a non-negative viscosity supersolution
in the cylinder 
$B_{2r} \times (-4r^2,0)$, for some $r>0$. 
Then there are constants $q>0$ and $C>0$, only depending on 
$p$ and $n$, such that  
\[
\biggl(\vint_{B_r \times (-4r^2,-3r^2)} u^q\, dx\, dt \biggr) ^{1/q}
    \le C \inf_{B_r \times (-r^2,0)} u.
\]
\end{thm}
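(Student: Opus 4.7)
The proof plan is to reduce the statement to the weak Harnack inequality for fully nonlinear uniformly parabolic viscosity supersolutions, which is already available in the literature, and then to transport it from unit-scale cylinders to the desired cylinders by the natural parabolic scaling.

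First, I would observe (following the computation made in the text immediately before the theorem statement) that the normalized $p$-Laplacian can be written as $\Delta_p^N u = \sum_{i,j} a_{ij}(x)\,\bdy_{ij} u(x)$ with coefficient matrix $(a_{ij})$ whose eigenvalues lie in the interval $[\la,\La]$, where $\la=\min\{p-1,1\}$ and $\La=\max\{p-1,1\}$. Consequently $\Delta_p^N$ is trapped between the Pucci extremal operators $\mathcal{M}^{-}_{\la,\La}$ and $\mathcal{M}^{+}_{\la,\La}$, and a non-negative viscosity supersolution $u$ of \eqref{eq:normalized-p-parabolic} satisfies $u_t - \mathcal{M}^{+}_{\la,\La}(D^2 u)\ge 0$ in the viscosity sense. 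Hence $u$ lies in the class covered by the weak Harnack inequality of Jin--Silvestre \cite[Theorem~2.1]{jins} (or equivalently Wang \cite[Corollary~4.14]{wang92} and Imbert--Silvestre \cite[Theorem~4.15]{imberts13}): for every non-negative viscosity supersolution $v$ on $B_1\times(-1,0)$ there exist $q>0$ and $C>0$, depending only on $\la$, $\La$ and $n$, i.e.\ on $p$ and $n$, such that
\[
\biggl(\vint_{B_{1/2}\times(-1,-3/4)} v^q\,dx\,dt\biggr)^{1/q}
\le C\inf_{B_{1/2}\times(-1/4,0)} v.
\]

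Next, I would use the scaling invariance stated earlier in the excerpt: if $u$ is a viscosity supersolution of \eqref{eq:normalized-p-parabolic} on $B_{2r}\times(-4r^2,0)$, then $v(y,s):=u(2ry,4r^2 s)$ is a non-negative viscosity supersolution on $B_1\times(-1,0)$. Applying the unit-scale weak Harnack inequality to $v$ and changing variables back through $(x,t)=(2ry,4r^2 s)$, the Jacobian cancels between the numerator and denominator of the integral average, while the inner cylinders $B_{1/2}\times(-1,-3/4)$ and $B_{1/2}\times(-1/4,0)$ are transformed precisely to $B_r\times(-4r^2,-3r^2)$ and $B_r\times(-r^2,0)$. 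This yields the stated inequality with the same constants $q$ and $C$, which remain dependent only on $p$ and $n$.

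There is no substantial obstacle: the analytic content is entirely contained in the cited weak Harnack inequality for Pucci-type parabolic operators, and the only thing to verify carefully is that the parabolic scaling $(x,t)\mapsto(2rx,4r^2 t)$ preserves the class of viscosity supersolutions of \eqref{eq:normalized-p-parabolic} (which it does, as already noted) and maps the unit cylinders to the stated ones, so that the constants are preserved under the rescaling.
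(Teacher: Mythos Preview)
Your proposal is correct and follows essentially the same route as the paper: the paper does not give a separate proof but explains, in the paragraph immediately preceding the theorem, that the normalized $p$-Laplacian is controlled by Pucci operators with $\la=\min\{p-1,1\}$ and $\La=\max\{p-1,1\}$, so the result is extracted from Jin--Silvestre~\cite[Theorem~2.1]{jins} (or Wang~\cite{wang92}, Imbert--Silvestre~\cite{imberts13}) and transported to the stated cylinders via the parabolic dilation $(x,t)\mapsto(2rx,(2r)^2t)$. Your write-up is simply a more explicit version of this same argument.
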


It follows from this and a covering argument that $u$ is finite a.e.
Moreover, viscosity supersolutions are lower semicontinuously
regularized in the following sense.
Banerjee--Garofalo~\cite[Proposition~3.3]{banerjeeg15}
obtained a similar result stated in a slightly weaker form.

\begin{prop} \label{prop-essliminf-reg}
Assume that $u$ is a viscosity supersolution 
in $\Theta$. 
Then for all $(x_0,t_0) \in \Theta$,
\begin{equation}   \label{eq-essliminf-reg}
    u(x_0,t_0) 
      = \liminf_{(x,t) \to (x_0,t_0)} u(x,t) 
      = \essliminf_{(x,t) \to (x_0,t_0)} u(x,t) 
      = \essliminf_{\substack{(x,t) \to (x_0,t_0) \\ t < t_0}} u(x,t).
\end{equation}
\end{prop}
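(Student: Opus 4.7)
The four quantities in \eqref{eq-essliminf-reg} automatically satisfy
\[
u(x_0,t_0) \le \liminf_{(x,t)\to(x_0,t_0)} u(x,t) \le \essliminf_{(x,t)\to(x_0,t_0)} u(x,t)
\le \essliminf_{\substack{(x,t)\to(x_0,t_0)\\ t<t_0}} u(x,t),
\]
the first inequality from the lower semicontinuity of $u$, the second because $\inf\le\essinf$ on every punctured neighborhood, and the third because restricting the domain of the essential infimum to $\{t<t_0\}$ can only enlarge it. It therefore remains to prove the reverse inequality
\[
L := \essliminf_{\substack{(x,t)\to(x_0,t_0)\\ t<t_0}} u(x,t) \le u(x_0,t_0) =: m.
\]

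The plan is to argue by contradiction: assume $L>m$, set $c:=L-m>0$, and fix an arbitrary $\epsilon\in(0,c)$, to be sent to zero at the end. Using the lower semicontinuity of $u$ at $(x_0,t_0)$ together with the definition of the essential liminf over $\{t<t_0\}$, one may choose $r>0$ so small that the cylinder $Q:=B_{2r}(x_0)\times(t_0-\tfrac72 r^2,\,t_0+\tfrac12 r^2)$ is contained in $\Theta$, the function
\[
v(x,t):=u(x,t)-m+\epsilon
\]
is a non-negative viscosity supersolution in $Q$ (using the additive invariance of the equation noted earlier), $v(x_0,t_0)=\epsilon$, and $v\ge c$ almost everywhere on $B_{2r}(x_0)\times(t_0-\tfrac72 r^2,\,t_0)$.

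The decisive step is then to apply Theorem~\ref{thm-weak-Harnack-superpara} to $v$ after the parabolic time-shift that places the reference time at $t_0+\tfrac12 r^2$ rather than $t_0$. In these shifted coordinates the ``past'' cylinder is
$P:=B_r(x_0)\times(t_0-\tfrac72 r^2,\,t_0-\tfrac52 r^2)\subset\{t<t_0\}$,
so $v\ge c$ a.e.\ on $P$, while the ``future'' cylinder
$F:=B_r(x_0)\times(t_0-\tfrac12 r^2,\,t_0+\tfrac12 r^2)$
now contains $(x_0,t_0)$ in its interior. Theorem~\ref{thm-weak-Harnack-superpara} thus yields
\[
c \le \biggl(\vint_{P} v^q\,dx\,dt\biggr)^{1/q} \le C\inf_{F} v \le C\, v(x_0,t_0)=C\epsilon.
\]
Since $c$ is fixed while $\epsilon>0$ is arbitrary, letting $\epsilon\to 0$ contradicts $c>0$.

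The main obstacle is converting the almost-everywhere information supplied by the essential liminf into a statement that can be refuted at the single point $(x_0,t_0)$. The device that makes this work is the forward time-shift by $\tfrac12 r^2$: it keeps $P$ (where the essliminf-hypothesis supplies the a.e.\ lower bound $v\ge c$) safely inside the half-space $\{t<t_0\}$, while simultaneously placing $(x_0,t_0)$ into the interior of $F$, so that the single-point value $v(x_0,t_0)=\epsilon$ caps the right-hand side of the weak Harnack inequality. Without this shift, one would only control $\inf_F v$ via limits that, by lower semicontinuity alone, give no upper bound of the form $v(x_0,t_0)=\epsilon$.
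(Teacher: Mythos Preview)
Your proof is correct and takes essentially the same approach as the paper's: a forward time-shift places $(x_0,t_0)$ in the interior of the ``future'' cylinder of the weak Harnack inequality while keeping the ``past'' cylinder inside $\{t<t_0\}$, so the a.e.\ lower bound from the essential liminf there contradicts $v(x_0,t_0)=\epsilon$ as $\epsilon\to0$. The only differences are cosmetic: the paper uses a variable shift $\delta\in(0,r^2)$ and an auxiliary level $a<L$ (fixing the essliminf neighbourhood independently of $\epsilon$), whereas you use the specific shift $\tfrac12 r^2$ and work with $c=L-m$ directly.
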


\begin{proof}
We can assume that $(x_0,t_0)=(0,0)$.
Since $u$, by definition, is lower semicontinuous we directly see that
\[
    u(0,0) 
      \le  \liminf_{(x,t) \to (0,0)} u(x,t) 
      \le  \essliminf_{(x,t) \to (0,0)} u(x,t) 
      \le \essliminf_{\substack{(x,t) \to (0,0) \\ t < 0}} u(x,t).
\]
If $u(0,0)=\infty$, then there is nothing to prove.
So, without loss of generality we may assume
that  $u(0,0)=0$.
Let $a\ge 0$ be arbitrary and such that
\begin{equation}
\label{eq:low-bound}
a < \essliminf_{\substack{(x,t) \to (0,0) \\ t < 0}} u(x,t).
\end{equation}
If no such $a$ exists, then~\eqref{eq-essliminf-reg} clearly holds. 
By~\eqref{eq:low-bound}, there exists $r_0>0$ such that $u>a$ a.e.\ in 
$B_{2r_0}\times(-4r_0^2,0)$.

As $u$ is lower semicontinuous and $u(0,0)=0$, we can for any $\eps>0$
find $r\in(0,r_0)$ and $\de\in(0,r^2)$ such that $u>-\eps$  everywhere in 
\[
B_{2r}\times (\de-4r^2,\de) \subset\Theta.
\]
Since $v:=u+\eps \ge 0$ is a viscosity supersolution 
and $(0,0) \in B_r\times(\de-r^2,\de)$, 
it then follows from the weak Harnack inequality 
(Theorem~\ref{thm-weak-Harnack-superpara}) that 
\[
\biggl( \vint_{B_r\times (\de-4r^2,\de-3r^2)}     
v^q \,dx\,dt \biggr)^{1/q}
\le C v(0,0) = C \eps.
\]
As $a<u<v$ a.e.\ in 
$B_{2r_0}\times(-4r_0^2,0)$
we can conclude that $a\le C\eps$.
Letting $\eps \to 0$ shows that $a =0$, and since $a$ 
in~\eqref{eq:low-bound} was arbitrary, \eqref{eq-essliminf-reg} follows.
\end{proof}

Next we recall the definition of superparabolic functions that frequently  
appears in the nonlinear parabolic potential theory, see 
Kilpel\"ainen--Lindqvist~\cite{KiLi96} and 
Banerjee--Garofalo~\cite{banerjeeg15}.
Unless otherwise stated, $Q$ stands for the box 
$Q = (a_1, b_1)\times \ldots \times (a_n,b_n)\subset\Rn$, 
and the sets 
\[
Q_{t_1,t_2} = Q \times (t_1,t_2)
\]
are called \emph{space-time boxes}.
The \emph{parabolic boundary} of the
space-time cylinder $G_{t_1,t_2}:= G \times (t_1,t_2) \subset \R^{n+1}$
is defined as
\[
\bdyp G_{t_1,t_2}=(G \times \{t_1\}) \cup (\bdy G \times [t_1,t_2]).
\]

\begin{definition}\label{def:superparabolic}
A function $u:\Theta\rightarrow (-\infty,\infty]$
is \emph{super\-parabolic} in $\Theta$ with respect to 
\eqref{eq:normalized-p-parabolic} if
\begin{enumerate}
\renewcommand{\theenumi}{\textup{(\roman{enumi})}}%
\item $u$ is lower semicontinuous;
\item $u$ is finite in a dense subset of ${\Theta}$;
\item $u$ satisfies the following comparison principle on each 
space-time box $Q_{t_1,t_2} \Subset\Theta$:  
If $h \in C(\overline{ Q_{t_1,t_2}})$ is a viscosity solution of 
\eqref{eq:normalized-p-parabolic} in $ Q_{t_1,t_2}$  satisfying 
$h\leq u$ on $\partial_p  Q_{t_1,t_2}$, then $h\leq u$ in $ Q_{t_1,t_2}$.
\end{enumerate}

A function $u:\Theta\rightarrow [-\infty,\infty)$ is \emph{subparabolic} 
if $-u$ is superparabolic.
\end{definition}

In Banerjee--Garofalo~\cite[Definition~3.1]{banerjeeg15} they use the name
generalized super/sub\-sol\-ution rather than 
super/sub\-pa\-ra\-bolic function.
They establish the following connection between 
viscosity supersolutions and superparabolic functions 
\cite[Corollary~3.5 and Theorem~3.6]{banerjeeg15}. 
In the case of the  non-normalized \p-parabolic equation,
the corresponding equivalence
was obtained in 
Juutinen--Lindqvist--Manfredi~\cite[Theorem~2.5]{juutinenlm01}.

\begin{thm} \label{thm-BG-3.5-3.6}
In a given domain, the viscosity supersolutions 
and superparabolic functions to \eqref{eq:normalized-p-parabolic}
are the same.
\end{thm}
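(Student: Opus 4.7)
\medskip
\noindent\textbf{Proof plan.} Axioms (i) and (ii) of Definitions~\ref{deff-visc-sol} and~\ref{def:superparabolic} match verbatim, so all the work lies in translating between the viscosity inequality (iii$'$) and the comparison-with-solutions axiom (iii) of Definition~\ref{def:superparabolic}. I treat the two directions in turn.

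\emph{Viscosity supersolution $\Rightarrow$ superparabolic.} Given a viscosity supersolution $u$, a box $Q_{t_1,t_2}\Subset\Theta$, and a viscosity solution $h\in C(\overline{Q_{t_1,t_2}})$ with $h\le u$ on $\bdyp Q_{t_1,t_2}$, I would conclude $h\le u$ in $Q_{t_1,t_2}$ by applying the parabolic comparison principle for lsc supersolutions versus continuous solutions. Since $\Delta_p^N$ is squeezed between the Pucci extremal operators with constants $\la=\min\{1,p-1\}$ and $\La=\max\{1,p-1\}$ (as used before Theorem~\ref{thm-weak-Harnack-superpara}), one can either invoke a Crandall--Ishii--Lions doubling-of-variables argument or cite the comparison statement from \cite{banerjeeg15}; this immediately gives (iii) of Definition~\ref{def:superparabolic}.

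\emph{Superparabolic $\Rightarrow$ viscosity supersolution, regular case.} For the other direction argue by contradiction: let $\vp\in C^2(\Theta)$ touch the superparabolic $u$ strictly from below at $(x_0,t_0)$ and suppose (iii$'$) fails. When $\nabla\vp(x_0,t_0)\ne 0$ and $\vp_t-\Delta_p^N\vp<0$ at $(x_0,t_0)$, continuity supplies a small space-time box $Q\Subset\Theta$ containing $(x_0,t_0)$ on which $\vp$ is a classical strict subsolution with nonvanishing gradient. Lower semicontinuity of $u-\vp$ and compactness of $\bdyp Q$ yield $\eta>0$ with $\vp+\eta<u$ on $\bdyp Q$. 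Solve the Dirichlet problem in $Q$ with continuous datum $(\vp+\eta)|_{\bdyp Q}$, producing a continuous viscosity solution $h$; then classical-to-viscosity comparison gives $\vp+\eta\le h$ in $Q$, while superparabolicity of $u$ gives $h\le u$. Evaluating at $(x_0,t_0)$ yields the contradiction
\[
u(x_0,t_0)+\eta=\vp(x_0,t_0)+\eta\le h(x_0,t_0)\le u(x_0,t_0).
\]

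\emph{Degenerate case and main obstacle.} The remaining case is $\nabla\vp(x_0,t_0)=0$, $D^2\vp(x_0,t_0)=0$, $\vp_t(x_0,t_0)<0$. Here I would slide and tilt, setting
\[
\widetilde\vp(x,t)=\vp(x,t)+\epsilon\langle v,x-x_0\rangle-M(\epsilon),
\]
with a unit vector $v$ and small $\epsilon>0$, choosing $M(\epsilon)$ so that $\widetilde\vp$ first touches $u$ from below in a small neighborhood of $(x_0,t_0)$ at some contact point $(x_\epsilon,t_\epsilon)$. Strict touching of $\vp$ forces $(x_\epsilon,t_\epsilon)\to(x_0,t_0)$ and $M(\epsilon)\to 0$ as $\epsilon\to 0$. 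For $\epsilon$ small the tilt dominates the vanishing gradient of $\vp$, so $\nabla\widetilde\vp(x_\epsilon,t_\epsilon)\ne 0$, while $D^2\widetilde\vp=D^2\vp\to 0$ and $\widetilde\vp_t\to\vp_t(x_0,t_0)<0$; consequently $\widetilde\vp_t-\Delta_p^N\widetilde\vp<0$ at $(x_\epsilon,t_\epsilon)$. The regular case now applies to $\widetilde\vp$ (after a further strict-touching perturbation if required), producing a contradiction. The main obstacle is precisely this localization: one must simultaneously control $(x_\epsilon,t_\epsilon)\to(x_0,t_0)$, keep the tilted gradient uniformly nonzero, and maintain the strict subsolution inequality despite the vanishing Hessian. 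This is essentially the content of the reduction lemma of Manfredi--Parviainen--Rossi~\cite{manfredipr10c} quoted after Definition~\ref{deff-visc-sol}, and it parallels the most technical step in the proof of \cite[Theorem~3.6]{banerjeeg15}.
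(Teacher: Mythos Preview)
Your first direction and the regular case of the converse match the paper's argument: it too cites the viscosity comparison principle (Chen--Giga--Goto~\cite{chengg91}, Giga~\cite{giga06}) for the forward implication, and for the converse argues by contradiction, solves the Dirichlet problem on a small box with datum $\vp+\de$, and compares. Where you diverge is the degenerate case, and there the paper takes a much simpler route that sidesteps the obstacle you flag. It does \emph{not} tilt. Since $D^2\vp(x_0,t_0)=0$ and $\vp\in C^2$, the matrix $D^2\vp$ is small nearby; hence whenever $\nabla\vp(x,t)\ne0$ in a small box around $(x_0,t_0)$, the quantity $\Delta_p^N\vp(x,t)=\Delta\vp(x,t)+(p-2)\Delta_\infty^N\vp(x,t)$ is close to $0$ (both terms are controlled by the eigenvalues of $D^2\vp$), so $\vp_t-\Delta_p^N\vp<0$ there as well, while at points with $\nabla\vp=0$ one simply has $\vp_t<0$ by continuity. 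Proposition~\ref{prop-classical} (applied to $-\vp$) then shows that $\vp$ itself is a viscosity subsolution on the box, and the contradiction proceeds exactly as in your regular case---both cases are handled uniformly with no perturbation needed.

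Your tilting argument, by contrast, has a genuine gap at precisely the step you label the main obstacle: the assertion that $\nabla\widetilde\vp(x_\epsilon,t_\epsilon)\ne0$ is not justified. You have $\nabla\widetilde\vp(x_\epsilon,t_\epsilon)=\nabla\vp(x_\epsilon,t_\epsilon)+\epsilon v$, and although $(x_\epsilon,t_\epsilon)\to(x_0,t_0)$ forces $\nabla\vp(x_\epsilon,t_\epsilon)\to0$, nothing controls the rate of this convergence relative to $\epsilon$; since $u$ is merely lower semicontinuous, the speed at which $(x_\epsilon,t_\epsilon)$ approaches $(x_0,t_0)$ is uncontrolled, and $\nabla\vp(x_\epsilon,t_\epsilon)=-\epsilon v$ along a subsequence is not excluded. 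The MPR reduction you invoke establishes the equivalence of~(iii) and~(iii$'$) in Definition~\ref{deff-visc-sol} and is already absorbed into the setup; it does not supply the missing estimate here. The paper's use of Proposition~\ref{prop-classical} makes the whole issue disappear.
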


\begin{cor}
The weak Harnack inequality 
{\rm(}Theorem~\ref{thm-weak-Harnack-superpara}{\rm)} and the
lower semicontinuous regularity
{\rm(}Proposition~\ref{prop-essliminf-reg}{\rm)} hold also for
superparabolic functions.
\end{cor}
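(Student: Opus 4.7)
The plan is essentially a one-line reduction. Theorem~\ref{thm-BG-3.5-3.6} asserts that in any given domain the class of viscosity supersolutions to~\eqref{eq:normalized-p-parabolic} coincides with the class of superparabolic functions. Both the weak Harnack inequality (Theorem~\ref{thm-weak-Harnack-superpara}) and the lower semicontinuous regularity (Proposition~\ref{prop-essliminf-reg}) are statements whose hypotheses involve only being a viscosity supersolution on the relevant cylinder (resp.\ on $\Theta$), and whose conclusions are pointwise / integral statements about the function itself.

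So the plan is: given a superparabolic function $u$ on $\Theta$, apply Theorem~\ref{thm-BG-3.5-3.6} to conclude that $u$ is a viscosity supersolution on $\Theta$, and restrict to whichever sub-cylinder is needed. For the weak Harnack inequality, take a non-negative superparabolic $u$ on $B_{2r}\times(-4r^2,0)$; by the equivalence it is a viscosity supersolution there, hence Theorem~\ref{thm-weak-Harnack-superpara} gives the stated estimate verbatim. For the semicontinuous regularity, take $(x_0,t_0)\in\Theta$ and apply Proposition~\ref{prop-essliminf-reg} to $u$ viewed as a viscosity supersolution.

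There is no obstacle here; the only thing to notice is that the equivalence in Theorem~\ref{thm-BG-3.5-3.6} is formulated in the same generality (arbitrary open $\Theta\subset\R^{n+1}$) as required, so no restriction to cylinders is needed when transferring from one class to the other. The proof is therefore a direct appeal to Theorem~\ref{thm-BG-3.5-3.6}.
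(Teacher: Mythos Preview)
Your proposal is correct and matches the paper's intent: the corollary is stated without proof immediately after Theorem~\ref{thm-BG-3.5-3.6}, and is meant to follow at once from the equivalence of superparabolic functions and viscosity supersolutions. Your observation that only the direction ``superparabolic $\Rightarrow$ viscosity supersolution'' is needed here is also in line with the paper's logical structure, since that direction of the proof of Theorem~\ref{thm-BG-3.5-3.6} does not rely on Proposition~\ref{prop-essliminf-reg} or the weak Harnack inequality.
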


\begin{remark}   \label{rem-deff-differ-B-G} 
Observe that in \cite[Definition~3.1]{banerjeeg15}, 
instead of our condition (iii)  in Definition~\ref{def:superparabolic}
 they require
the comparison principle on each \emph{open cylinder} $ G_{t_1,t_2}$, 
not only on space-time boxes as here.
Thus, their definition of superparabolicity is \emph{a priori} more
restrictive than ours.

On the other hand, Theorem~\ref{thm-BG-3.5-3.6} shows that our definition of
superparabolicity is equivalent to viscosity supersolutions, 
whose definition differs from the one in \cite{banerjeeg15} only in
that we do not assume boundedness.
Since boundedness is not needed to conclude that (possibly unbounded)
viscosity supersolutions are superparabolic in the sense of 
\cite[Definition~3.1]{banerjeeg15},
it also follows that (iii) is sufficient to define the same class 
of superparabolic functions as in \cite[Definition~3.1]{banerjeeg15}. 
\end{remark}

\begin{proof}[Proof of Theorem~\ref{thm-BG-3.5-3.6}]
The only difference to \cite[Corollary~3.5]{banerjeeg15} 
is that here the viscosity super\-sol\-utions 
are not assumed to be bounded. 
However, the comparison principle for 
the viscosity super/subsolutions 
does not require this assumption, see 
Chen--Giga--Goto~\cite[Theorem~4.1]{chengg91}, and  
Giga~\cite[Corollary~3.1.5]{giga06}.
It follows that viscosity supersolutions are superparabolic in the sense
of~\cite[Definition~3.1]{banerjeeg15} and thus also in the sense of our 
Definition~\ref{def:superparabolic}.
Note that \cite[Theorem~4.1]{chengg91} is formulated in terms of
lower semicontinuously regularized (viscosity) supersolutions, but this is
provided by Proposition~\ref{prop-essliminf-reg}.

The converse direction is obtained through a counterassumption, 
see for example  Juutinen--Lindqvist--Manfredi~\cite[p.\ 704]{juutinenlm01}: 
Suppose that $u$ is a superparabolic function but that there is 
$\vp \in C^2(\Theta)$ 
as in  
Definition~\ref{deff-visc-sol}\,(iii$'$) which
touches $u$ at some $(x_0,t_0) \in\Theta$ from below and either
 \[
\varphi_t(x_0,t_0)-\Delta_p^N \varphi(x_0,t_0)<0
\quad \text{and} \quad  
\nabla\vp(x_0,t_0)\neq0,
\]
or
\[
\varphi_t(x_0,t_0)< 0, \quad \nabla\vp(x_0,t_0)=0
\quad \text{and} \quad  
D^2 \vp(x_0,t_0)=0.
\]
By continuity, 
and noting that in the latter case 
$\Delta^N_p\phi(x,t)$ 
is close to 0 whenever $\nabla \vp(x,t) \ne 0$ and $(x,t)$ is close to
$(x_0,t_0)$,
we see that there is a space-time box $Q_{t_1,t_2}\ni (x_0,t_0)$ 
such that for every $(x,t) \in Q_{t_1,t_2}$, 
either 
\[ 
\phi_t(x,t)-\Delta^N_p\phi(x,t)<0
\quad \text{and} \quad
\nabla \phi(x,t) \ne 0,
\]
or 
\[
  \phi_t(x,t)<0
\quad \text{and} \quad
\nabla \phi(x,t)=0.
\]
Thus,
$\vp$ is a continuous viscosity subsolution in  $Q_{t_1,t_2}$,
by Proposition~\ref{prop-classical}.
Since $u$ is lower semicontinuous and $\partial _p Q_{t_1,t_2}$ is compact, 
there is $\delta>0$ such that $\vp+\delta\le u$ on $\partial_p Q_{t_1,t_2}$. 
As $\phi+\de\in C^2(\Theta)$, 
using Theorem~2.6 in Banerjee--Garofalo~\cite{banerjeeg15}
we can find a viscosity
solution $h$ with continuous boundary values $\phi+\de$ on $\bdy_p Q_{t_1,t_2}$.
By the first part of the proof, $\phi+\de$ is subparabolic. Thus
property~(iii) in Definition~\ref{def:superparabolic} yields
$\vp+\delta\le h\le u$ in $Q_{t_1,t_2}$, which is a contradiction 
since $\vp(x_0,t_0)=u(x_0,t_0)$.  
\end{proof}

The following important elliptic-type comparison principle
is a slight generalization of the one in 
Banerjee--Garofalo~\cite[Lemma~3.10]{banerjeeg15}, where it was 
proved for bounded functions.
Note that if $u$ and $v$ are superparabolic, then it is easy to see that
$\min\{u,v\}$ is also superparabolic, and in particular $u_k:=\min\{u,k\}$
is superparabolic if $k \in\R$.
This fact is a special case of the pasting Lemma~\ref{lem-pasting} below,
which we however cannot yet deduce. 

\begin{thm} \label{thm-elliptic-comp}
\textup{(Elliptic-type comparison principle)}
Let $\Theta$ be a bounded open subset of  $\R^{n+1}$.
Suppose that
$u$ is super\-pa\-ra\-bol\-ic and $v$ is sub\-parabolic in $\Theta$. If
 \begin{equation} \label{eq-elliptic-comp}
   \infty \ne    \limsup_{\Theta \ni (y,s)\rightarrow (x,t)} v(y,s)\leq
    \liminf_{\Theta \ni (y,s)\rightarrow (x,t)} u(y,s) \ne -\infty
  \end{equation}
for all $(x,t) \in \partial\Theta$,
then $v\leq u$ in $\Theta$.
\end{thm}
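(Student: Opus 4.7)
The strategy is to reduce to the bounded setting of Banerjee--Garofalo~\cite[Lemma~3.10]{banerjeeg15} by truncation. The key preliminary step is to prove that, under hypothesis~\eqref{eq-elliptic-comp}, the superparabolic function~$u$ is bounded below and the subparabolic~$v$ is bounded above on all of~$\Theta$.

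To produce the lower bound on~$u$, note that for each $\xi\in\bdy\Theta$ the assumption $\liminf_{\Theta\ni(y,s)\to\xi}u(y,s)>-\infty$ yields a ball $B_{r_\xi}(\xi)$ on which~$u$ is bounded below by a finite constant. Since $\bdy\Theta$ is compact, finitely many such balls cover it, producing an open neighborhood $W\supset\bdy\Theta$ on which~$u$ has a finite lower bound. On the complementary set $K:=\overline{\Theta}\setm W$, which is compact and contained in~$\Theta$, the lower semicontinuous function~$u$ attains a finite minimum. Combining these bounds, $u\ge -M$ on~$\Theta$ for some $M>0$; the argument producing $v\le M$ is symmetric.

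Enlarging $M$ if necessary so that also $\limsup_{\Theta\ni(y,s)\to\xi}v(y,s)\le M$ and $\liminf_{\Theta\ni(y,s)\to\xi}u(y,s)\ge -M$ for every $\xi\in\bdy\Theta$ (both finite by the same compactness argument applied to the upper, resp.\ lower, semicontinuous boundary-limit functions), I define
\[
U:=\min\{u,M\}\quad\text{and}\quad V:=\max\{v,-M\}.
\]
By the remark preceding the theorem (and its dual applied to $-v$), $U$ is superparabolic and $V$ is subparabolic, and both lie in $[-M,M]$ throughout~$\Theta$. At each $\xi\in\bdy\Theta$ one has the identities $\limsup V=\max\{\limsup v,-M\}$ and $\liminf U=\min\{\liminf u,M\}$, which together with the four inequalities $\limsup v\le\liminf u$, $\limsup v\le M$, $-M\le\liminf u$, and $-M\le M$ yield $\limsup V\le\liminf U$ at every boundary point.

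The bounded comparison principle \cite[Lemma~3.10]{banerjeeg15} then applies to the pair $(U,V)$ and gives $V\le U$ in~$\Theta$; since $v\le V$ and $U\le u$ pointwise, this yields $v\le u$ in~$\Theta$, as required. The principal technical point is the a priori global lower bound on~$u$ (and upper bound on~$v$) on all of~$\Theta$, rather than merely near the boundary: without it the truncation $\min\{u,M\}$ need not be bounded below, and the bounded version of the comparison principle could not be invoked directly.
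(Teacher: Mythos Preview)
Your proof is correct and follows essentially the same approach as the paper's: establish global one-sided bounds on $u$ and $v$ via compactness and semicontinuity, truncate to bounded super/subparabolic functions, and invoke \cite[Lemma~3.10]{banerjeeg15}. The only cosmetic difference is that the paper truncates at the levels $M=\sup_\Theta v$ and $m=\inf_\Theta u$ rather than at a single symmetric $\pm M$, and it explicitly notes that Theorem~\ref{thm-BG-3.5-3.6} and Remark~\ref{rem-deff-differ-B-G} are needed to reconcile the present definition of superparabolicity with the (a priori stronger) one in~\cite{banerjeeg15} before applying their Lemma~3.10.
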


\begin{proof}
By compactness, \eqref{eq-elliptic-comp} and semicontinuity,
$u$ is bounded from below and $v$ is bounded
from above.
Let $M=\sup_\Theta v$, $m=\inf_\Theta u$, $u_M=\min\{u,M\}$ and 
$v_m=\max\{v,m\}$. 
Then $u_M$ and $v_m$ satisfy a similar comparison on the boundary
as in \eqref{eq-elliptic-comp}.
In view of Theorem~\ref{thm-BG-3.5-3.6} 
and Remark~\ref{rem-deff-differ-B-G}, it thus follows
from Lemma~3.10 in Banerjee--Garofalo~\cite{banerjeeg15}
that $v_m \le u_M$, and hence $v \le u$,  in $\Theta$.
\end{proof}

For evolution equations, a parabolic comparison principle
is more natural since
it avoids any requirements on the future  boundary.

\begin{thm} \label{thm-parabolic-comp} 
\textup{(Parabolic comparison principle)}
Let $\Theta$ be a bounded open subset of  $\R^{n+1}$.
Suppose that $u$ is super\-parabolic and $v$ is sub\-parabolic
in $\Theta$.
Let $T \in \R$  and assume that~\eqref{eq-elliptic-comp} holds
for all  $(x,t) \in \partial\Theta$ with $t< T$.
Then $v\leq u$ in $\Thetaminus= \{(x,t) \in \Theta : t< T\}$.
\end{thm}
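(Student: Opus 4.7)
The plan is to reduce the statement to the elliptic-type comparison principle (Theorem~\ref{thm-elliptic-comp}) by truncating $\Theta$ in time and perturbing $v$ so that it is driven to $-\infty$ near the new top. Fix an arbitrary $T' < T$ and work on the bounded open set $\Theta_{T'} := \Theta \cap \{t < T'\}$. Its boundary decomposes into the \emph{lateral} part $\partial\Theta \cap \{t \le T'\}$, where the hypothesis \eqref{eq-elliptic-comp} is available for $(v,u)$ since $T' < T$, and the \emph{top} slice $\overline\Theta \cap \{t = T'\}$, where the hypothesis gives us nothing. To kill the top slice I introduce the perturbation
\[
v_\eps(x,t) := v(x,t) - \frac{\eps}{T'-t}, \qquad (x,t) \in \Theta_{T'},\ \eps > 0.
\]

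The first step is to check that $v_\eps$ is subparabolic in $\Theta_{T'}$. If $\psi \in C^2(\Theta_{T'})$ touches $v_\eps$ from above at $(x_0,t_0)$, then $\phi(x,t) := \psi(x,t) + \eps/(T'-t)$ touches $v$ from above at $(x_0,t_0)$, and $\nabla\phi = \nabla\psi$, $D^2\phi = D^2\psi$, while $\phi_t(x_0,t_0) = \psi_t(x_0,t_0) + \eps/(T'-t_0)^2$. Feeding $\phi$ into the viscosity subsolution inequality for $v$ therefore produces the corresponding inequality for $v_\eps$ with test function $\psi$, in fact with strict sign. Since $v_\eps$ is also upper semicontinuous and finite on a dense subset of $\Theta_{T'}$, Theorem~\ref{thm-BG-3.5-3.6} implies that $v_\eps$ is subparabolic in $\Theta_{T'}$.

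Next I would verify \eqref{eq-elliptic-comp} for the pair $(v_\eps, u)$ on the whole of $\partial\Theta_{T'}$. On the lateral part it follows from the hypothesis for $(v,u)$ together with $v_\eps \le v$. For a top point $(x, T')$ the key observation is that $v$ is locally bounded above near $(x,T')$: if $(x,T') \in \Theta$ this is upper semicontinuity, and if $(x,T') \in \partial\Theta$ (necessarily with $T' < T$) it is the $\limsup v < \infty$ part of \eqref{eq-elliptic-comp}. Since $\eps/(T'-t) \to +\infty$ as $t \uparrow T'$, this forces $\limsup_{(y,s) \to (x,T'),\ s<T'} v_\eps(y,s) = -\infty$, so \eqref{eq-elliptic-comp} is trivial at such points; the finiteness of $\liminf u$ at top points follows from the lateral hypothesis and from lower semicontinuity at interior top points.

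Theorem~\ref{thm-elliptic-comp} then yields $v_\eps \le u$ throughout $\Theta_{T'}$. Letting $\eps \to 0$ gives $v \le u$ on $\Theta_{T'}$ (trivially where $v = -\infty$, and because $v_\eps(x,t) \to v(x,t)$ elsewhere), and finally letting $T' \to T$ produces $v \le u$ on $\Thetaminus$. The main obstacle I foresee is the case analysis on the top slice, where one must distinguish interior and boundary points of $\Theta$ at time $T'$ and match each with the correct semicontinuity; verifying that the perturbation preserves the subsolution property should be essentially automatic, since the perturbing term depends only on $t$ with positive $t$-derivative and therefore does not interact with the nonlinear spatial operator $\Delta_p^N$.
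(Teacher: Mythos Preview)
Your proof is correct and is a genuinely different route from the paper's. Both arguments reduce the parabolic statement to the elliptic-type comparison principle (Theorem~\ref{thm-elliptic-comp}) on a time-truncated set, but they neutralize the top slice in different ways. The paper first proves a simple pasting lemma (Lemma~\ref{lem-ext-large}): given any $T'<T$, it shows that $u$ is bounded below and $v$ bounded above on $\{t\le T'\}\cap\Theta$, replaces $u$ and $v$ by the constants $M=\sup v$ and $m=\inf u$ above time $T'$, and observes that the resulting $\ut$, $\vt$ are super/subparabolic on all of $\Theta$; then Theorem~\ref{thm-elliptic-comp} is applied on $\Theta$ itself. You instead keep working on $\Theta_{T'}$ and subtract the time-penalty $\eps/(T'-t)$ from $v$, which forces $\limsup v_\eps=-\infty$ on the top slice and leaves the spatial part of the viscosity inequality untouched; Theorem~\ref{thm-BG-3.5-3.6} converts back and forth between the subparabolic and viscosity frameworks. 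Your approach is the standard device from the viscosity literature (cf.\ Crandall--Ishii--Lions) and avoids the intermediate Lemma~\ref{lem-ext-large}, at the price of passing through the equivalence in Theorem~\ref{thm-BG-3.5-3.6}; the paper's approach stays entirely within the superparabolic comparison framework and yields Lemma~\ref{lem-ext-large} as a by-product, which is then upgraded to the full pasting Lemma~\ref{lem-pasting} once the parabolic comparison principle is available. The case analysis you flag on the top slice is handled correctly: interior top points use semicontinuity of $u$ and $v$, while boundary top points (which have $T'<T$) fall under the hypothesis~\eqref{eq-elliptic-comp}.
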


We will deduce the parabolic comparison principle from 
the elliptic-type comparison principle.
In order to do so we will need the following simple pasting lemma, 
which may be of independent interest.

\begin{lem} \label{lem-ext-large}
Assume that $u$ is superparabolic in 
$\Thetaminus:=\{(x,t) \in \Theta : t < T\}$.
Let $k  \in \R$. Then the function
\[
      v(x,t)=\begin{cases}
        \min\{u(x,t),k\}, & \text{if } (x,t) \in \Theta \text{ and } t < T, \\
        \min \Bigl\{\displaystyle\liminf_{\Thetaminus\ni\zeta\to(x,t)} u(\zeta),k \Bigr\}, 
                    & \text{if } (x,t) \in \Theta \text{ and } t = T, \\ 
        k, & \text{if } (x,t) \in \Theta \text{ and } t > T,
        \end{cases}
\]
is superparabolic in $\Theta$.
\end{lem}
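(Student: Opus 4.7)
The plan is to check the three conditions of Definition~\ref{def:superparabolic} for $v$ in $\Theta$. On the open half-spaces $\Theta\cap\{t<T\}$ and $\Theta\cap\{t>T\}$ the function $v$ is respectively $\min\{u,k\}$ (lsc as the minimum of the lsc $u$ and a constant) and the constant $k$, so the only nontrivial point of lower semicontinuity is the slice $\{t=T\}$. Writing $w(x):=\liminf_{\Thetaminus\ni\zeta\to(x,T)}u(\zeta)$, a short $\eps$-argument shows that $w$ is itself lsc in $x$ (the liminf of a fixed function along a moving basepoint always is). Splitting a sequence $(x_j,t_j)\to(x_0,T)$ into its parts with $t_j<T$, $t_j=T$ and $t_j>T$, and using $\liminf\min\{u(x_j,t_j),k\}\ge\min\{\liminf u(x_j,t_j),k\}\ge\min\{w(x_0),k\}$, the lsc of $w$, and $v\equiv k$ respectively, gives $\liminf v(x_j,t_j)\ge v(x_0,T)$. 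Condition (ii) is automatic since $v\le k$ everywhere and $v=\min\{u,k\}$ is finite wherever $u$ is.

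The main work is the comparison principle (iii). I would first note the auxiliary fact that $u_k:=\min\{u,k\}$ is superparabolic in $\Thetaminus$: if $h'$ is a viscosity solution with $h'\le u_k$ on the parabolic boundary of a box $\Subset\Thetaminus$, then $h'\le u$ inside by superparabolicity of $u$ and $h'\le k$ inside by applying (iii) to the constant $k$ (which is a viscosity solution), whence $h'\le u_k$ inside. Now fix $Q_{t_1,t_2}\Subset\Theta$ and a viscosity solution $h\in C(\overline{Q_{t_1,t_2}})$ with $h\le v$ on $\bdyp Q_{t_1,t_2}$. If $t_1\ge T$ then $v\equiv k$ on $Q_{t_1,t_2}$ and $h\le v\le k$ on $\bdyp$, so comparison with the constant $k$ gives $h\le v$. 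If $t_2\le T$, I would apply the superparabolicity of $u_k=v$ in the approximating boxes $Q_{t_1,t_2-\eps}\Subset\Thetaminus$ (whose parabolic boundaries lie inside $\bdyp Q_{t_1,t_2}$) and take $\eps\to0$.

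The interesting case is $t_1<T<t_2$, which I would treat via the decomposition $Q_{t_1,t_2}=Q_{t_1,T}\cup(Q\times\{T\})\cup Q_{T,t_2}$. The argument of the previous paragraph gives $h\le v$ in $Q_{t_1,T}$. Comparison of $h$ with the constant $k$ on the full box $Q_{t_1,t_2}$ (using $v\le k$ on $\bdyp$) shows $h\le k$ everywhere inside, in particular on $Q\times\{T\}$. For the finer bound on that slice, fix $x_0\in Q$; since $(x_0,T)$ lies in the interior of $Q_{t_1,t_2}$, a full $\Thetaminus$-neighborhood of $(x_0,T)$ sits inside $Q_{t_1,T}$, so $w(x_0)=\liminf_{Q_{t_1,T}\ni\zeta\to(x_0,T)}u(\zeta)$. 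Along any such sequence $\zeta_j\to(x_0,T)$ one has $h(\zeta_j)\le v(\zeta_j)\le u(\zeta_j)$, and passing to the liminf using the continuity of $h$ yields $h(x_0,T)\le w(x_0)$; combined with $h(x_0,T)\le k$ this gives $h(x_0,T)\le v(x_0,T)$. For $(x,T)$ with $x\in\bdy Q$ the inequality is already part of the boundary data. Finally, on $Q_{T,t_2}\Subset\Theta$, where $v\equiv k$, the parabolic boundary is covered either by the slice just handled or by $\bdyp Q_{t_1,t_2}$, so $h\le k$ on $\bdyp Q_{T,t_2}$ and one more comparison with the constant $k$ finishes the proof. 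The main obstacle is precisely this transfer across the slice $t=T$: it uses in an essential way that the value of $v$ on $\{t=T\}$ is defined as the $\Thetaminus$-liminf of $u$, so that the continuity of $h$ matches the one-sided bound that the data force it to satisfy.
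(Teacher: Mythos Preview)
Your proof is correct and follows essentially the same strategy as the paper's: establish $h\le k$ on the whole box by comparing with the constant $k$, use the superparabolicity in $\Thetaminus$ on sub-boxes to get $h\le u$ below the slice $t=T$, and pass to the slice via continuity of $h$ and the liminf definition of $v$ there.

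The paper organizes this slightly more efficiently: instead of splitting into cases according to the position of $t_1,t_2$ relative to $T$ and building the auxiliary fact that $u_k=\min\{u,k\}$ is superparabolic, it fixes an arbitrary point $(x,t)\in Q_{t_1,t_2}$ and treats the three cases $t>T$, $t<T$, $t=T$ directly. For $t<T$ it simply picks $t'=\tfrac12(t+T)$, notes that $Q_{t_1,t'}\Subset\Thetaminus$ with $\bdyp Q_{t_1,t'}\subset\bdyp Q_{t_1,t_2}$, and compares $h$ with $u$ (not $u_k$) there; the bound $h\le k$ is then combined separately. This avoids both your $\eps\to0$ approximation and the preliminary lemma about $u_k$, but the underlying idea is identical.
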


Note that the complicated definition
for $t=T$ is needed
for $v$ to be lower semicontinuous.

\begin{proof}
By construction, $v$ is lower semicontinuous and bounded from above.
It remains to show that $v$ satisfies the comparison principle.
We therefore let $ Q_{t_1,t_2} \Subset\Theta$
be a space-time box and $h \in C(\overline{ Q_{t_1,t_2}})$ be a  
a viscosity solution in $ Q_{t_1,t_2}$ satisfying
\[
h\leq v \quad \text{on }\partial_p  Q_{t_1,t_2}.
\] 
 We first note that 
since $h \le k$ on $\partial_p  Q_{t_1,t_2}$ and the constant function $k$ is
superparabolic, it is true that $h \le k$ in $ Q_{t_1,t_2}$.
 To verify that $h\leq v$ in  $Q_{t_1,t_2}$ we let $(x,t) \in  Q_{t_1,t_2}$, 
and consider the three cases: $t>T$, $t<T$ and $t=T$
separately.

If $t>T$, then $h(x,t) \le k = v(x,t)$.
On the other hand if
$t<T$, we let $t'=\tfrac12(t+T)<T$.
Then $ Q_{t_1,t'}  \Subset \Thetaminus$ and $h \le u$ on $\bdyp  Q_{t_1,t'}$.
Together with the superparabolicity of $u$ in $\Thetaminus$, 
this yields $h(x,t) \le u(x,t)$.
As
$h \le k$, we conclude that $h(x,t) \le v(x,t)$ if $t <T$.

Finally, if $\xi=(x,T)\in Q_{t_1,t_2}$ then, since
$h\le u$ in $\Theta_{-}$,
we conclude from the continuity of $h$ 
that
\[
h(\xi)  
= \lim_{\Theta_{-}\ni\zeta\to\xi} h(\zeta) 
       \le \liminf_{\Theta_{-}\ni\zeta\to\xi} u(\zeta).
\]
As
$h \le k$, it follows that
$h(x,T) \le  v(x,T)$.
\end{proof}

\begin{proof}[Proof of Theorem~\ref{thm-parabolic-comp}]
Let $(x_0,t_0) \in \Theta$ with $t_0 <T$, and set $T'=\tfrac{1}{2}(t_0+T)$.
Then the lower semicontinuity of $u$ and upper semicontinuity of $v$, 
together with 
\eqref{eq-elliptic-comp} show that $u$ is bounded from below
and $v$ is bounded from above in 
\[\Thetatminus:=\{(x,t)\in \Theta : t \le T'\}.
\]
Let $m=\inf_{\Thetatminus} u$, $M=\sup_{\Thetatminus} v$,
\[
      \ut=\begin{cases}
        \min\{v,M\} & \text{in } \Thetatminus, \\
        M & \text{in } \Theta \setm \Thetatminus,
        \end{cases}
        \quad \text{and} \quad
      \vt=\begin{cases}
        \max\{v,m\} & \text{in } \Thetatminus, \\
        m & \text{in } \Theta \setm \Thetatminus.
        \end{cases}
\]
By Lemma~\ref{lem-ext-large}, $\ut$ is superparabolic and $\vt$ is 
subparabolic in $\Theta$.
Now $\ut$ and $\vt$ satisfy the assumptions for the elliptic comparison
principle (Theorem~\ref{thm-elliptic-comp}) in $\Theta$,
and thus $\vt \le \ut$ in $\Theta$.
Hence $v(x_0,t_0) \le \vt(x_0,t_0)\le \ut(x_0,t_0) \le u(x_0,t_0)$.
\end{proof}

Having established the parabolic comparison principle 
(Theorem~\ref{thm-parabolic-comp}), we can obtain the following
generalization of Lemma~\ref{lem-ext-large},
which is useful when constructing new superparabolic
functions.

\begin{lem} \label{lem-pasting}
\textup{(Pasting lemma)}
Let $U \subset \Theta$ be open. 
Also let $u$ and $v$ be super\-pa\-ra\-bo\-lic in $\Theta$ and $U$,
respectively,
and let
\[
    w=\begin{cases}
     \min\{u,v\} & \text{in } U, \\
     u & \text{in } \Theta \setm U. \\
    \end{cases}
\] 
If $w$ is lower semicontinuous, then $w$ is superparabolic in $\Theta$.
\end{lem}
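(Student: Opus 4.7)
The plan is to verify the three conditions of Definition~\ref{def:superparabolic} for $w$. Condition~(i) is the standing hypothesis, and (ii) is immediate because $w\le u$ and $u$ is finite in a dense subset of $\Theta$. The real content is the comparison condition~(iii): given a space-time box $Q_{t_1,t_2}\Subset\Theta$ and a viscosity solution $h\in C(\overline{Q_{t_1,t_2}})$ satisfying $h\le w$ on $\bdyp Q_{t_1,t_2}$, I must show $h\le w$ throughout $Q_{t_1,t_2}$.

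My approach is to establish the two estimates $h\le u$ on $Q_{t_1,t_2}$ and $h\le v$ on $V:=U\cap Q_{t_1,t_2}$ separately, then paste them back. The first is straightforward: since $w\le u$, the hypothesis gives $h\le u$ on $\bdyp Q_{t_1,t_2}$, and the superparabolicity of $u$ in $\Theta$ then yields $h\le u$ in all of $Q_{t_1,t_2}$. On $Q_{t_1,t_2}\setm U$ this already says $h\le u=w$, so the task reduces to $h\le v$ on the bounded open set $V$. I intend to obtain this by applying the parabolic comparison principle (Theorem~\ref{thm-parabolic-comp}) to $h$ and $v$ on $V$ with $T=t_2$; note that $v$ is superparabolic on $V$ as an open subset of $U$, $h$ is subparabolic there, and every point of $V$ satisfies $t<t_2$.

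Verifying the boundary hypothesis of Theorem~\ref{thm-parabolic-comp} at $\xi=(x,t)\in\bdy V$ with $t<t_2$ amounts to showing $h(\xi)\le\liminf_{V\ni\zeta\to\xi}v(\zeta)$, with neither side infinite (the left side equals $h(\xi)$ by continuity of $h$). If $\xi\in U$, then $\xi\in U\cap\bdyp Q_{t_1,t_2}$, so $h(\xi)\le w(\xi)\le v(\xi)$ by hypothesis and the lower semicontinuity of $v$ closes this case. If $\xi\in\bdy U$, then $\xi\notin U$ and $w(\xi)=u(\xi)$; the bound $h(\xi)\le u(\xi)$ follows from the first step when $\xi$ lies inside the box and from the hypothesis $h\le w$ on $\bdyp Q_{t_1,t_2}$ otherwise.

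The hard part is precisely this last sub-case, where $\xi$ lies on the interior boundary $\bdy U\cap Q_{t_1,t_2}$ and there is no direct relation between $h$ and $v$. Here the standing assumption that $w$ is lower semicontinuous is decisive: combined with $w\le v$ on $U$ it gives
\[
u(\xi)=w(\xi)\le\liminf_{V\ni\zeta\to\xi}w(\zeta)\le\liminf_{V\ni\zeta\to\xi}v(\zeta),
\]
which bridges the gap $h(\xi)\le u(\xi)\le\liminf_V v(\zeta)$ and simultaneously keeps the liminf strictly above $-\infty$. Once the boundary hypothesis is verified, Theorem~\ref{thm-parabolic-comp} yields $h\le v$ on $V$, which together with $h\le u$ on $Q_{t_1,t_2}$ produces $h\le w$ on all of $Q_{t_1,t_2}$, as required.
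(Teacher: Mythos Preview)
Your proof is correct and follows essentially the same route as the paper's own argument: first use the superparabolicity of $u$ to get $h\le u$ on the whole box, then apply the parabolic comparison principle (Theorem~\ref{thm-parabolic-comp}) on $V=U\cap Q_{t_1,t_2}$ to obtain $h\le v$ there, with the boundary verification split into the cases $\xi\in U$ and $\xi\notin U$ and the lower semicontinuity of $w$ used in the latter. The only cosmetic difference is that the paper phrases the second case as $\xi\notin U$ rather than $\xi\in\bdy U$, but since $\bdy V\subset\overline{U}$ these amount to the same thing.
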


\begin{proof}
Since $-\infty < w \le u$, we see that 
$w$ is finite in a dense subset of $\Theta$,
and we only have to obtain the comparison principle.
Therefore, let $Q_{t_1,t_2}  \Subset \Theta$ be a space-time box, 
and $h \in C(\overline{  Q_{t_1,t_2}})$ 
be a viscosity solution in $  Q_{t_1,t_2}$ such that 
\begin{equation} \label{eq-bdyp}
h \le w \quad \text{on }\bdyp Q_{t_1,t_2}.
\end{equation}
Since $h \le u$ on $\bdyp Q_{t_1,t_2}$ and $u$ is superparabolic, 
we directly have that 
\begin{equation}
\label{eq:h-le-u}
h \le u\quad  \text{in }  Q_{t_1,t_2}.
\end{equation}
To complete the proof we show  that 
\[
h \le v \quad \text{in }Q_{t_1,t_2} \cap U.
\]
To this end, we intend to use the parabolic comparison principle 
for $Q_{t_1,t_2} \cap U$ after verifying that $h(x,t) \le v(x,t)$ for 
$(x,t) \in \partial (Q_{t_1,t_2} \cap U)$ with $ t< t_2$. 
There are two cases: either $(x,t) \in U$ or $(x,t) \notin U$. 

First, suppose  that $(x,t) \in U$, then $(x,t) \in \bdyp   Q_{t_1,t_2}$ and thus
by the lower semicontinuity  of $v$,
\[
    \liminf_{Q_{t_1,t_2} \cap U \ni (y,s) \to (x,t)} v(y,s) \ge v(x,t) \ge w(x,t) \ge h(x,t),
\]
where the last inequality  follows from \eqref{eq-bdyp}. 
On the other hand, if $(x,t) \notin U$, then 
by the lower semicontinuity of $w$, 
\[
    \liminf_{Q_{t_1,t_2} \cap U \ni (y,s) \to (x,t)} v(y,s) \ge \liminf_{Q_{t_1,t_2} \cap U \ni (y,s) \to (x,t)} w(y,s) 
    \ge w(x,t) = u(x,t) \ge h(x,t),
\]
where the last  inequality follows from  \eqref{eq-bdyp} 
or \eqref{eq:h-le-u}, depending on whether 
$(x,t) \in \bdyp   Q_{t_1,t_2}$ or 
$(x,t) \in  Q_{t_1,t_2}$.
Hence, the parabolic comparison principle (Theorem~\ref{thm-parabolic-comp}) 
 shows that $h \le v$ in $Q_{t_1,t_2} \cap U$.
Together with \eqref{eq:h-le-u} this shows that
 $h \le w$ in $  Q_{t_1,t_2}$.
\end{proof}

The strong minimum principle for superparabolic functions
will be an important tool for us.
In the statement, we will use  polygonal paths. 
A  \emph{polygonal path}
is a continuous and piecewise linear function   
$\ga:[0,1]\to\Theta$.

\begin{theorem}   \label{thm-strong-min-princ}
\textup{(Strong minimum principle)}
Let $u \ge 0$ be superparabolic in $\Theta$,
$\xi_0  \in \Theta$ and let $\La$ be the set of all points $\xi \in \Theta$ 
such that there is a polygonal path $\ga:[0,1] \to \Theta$
from $\xi =\ga(0)$ to $\xi_0 =\ga(1)$ 
along which the time variable is strictly increasing.
If $u(\xi_0)= 0$,  then $u \equiv  0 $ in $\bar{\Lambda} \cap \Theta$.
\end{theorem}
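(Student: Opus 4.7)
The plan is to combine the weak Harnack inequality with a connectedness argument along the polygonal path. The key local step is: if $u\ge 0$ is superparabolic in $\Theta$ and $u(\xi_*)=0$ at $\xi_*=(x_*,t_*)\in\Theta$, then for every sufficiently small $r>0$, $u\equiv 0$ on the open cylinder $C_r:=B_r(x_*)\times(t_*-4r^2,t_*-3r^2)$. To prove this, apply Theorem~\ref{thm-weak-Harnack-superpara} to the time-shifted cylinder $B_{2r}(x_*)\times(t_*-4r^2+\eta,t_*+\eta)$ for small $\eta>0$, which lies in $\Theta$ because $\Theta$ is open; since $\xi_*$ is then strictly interior to the later sub-cylinder $B_r(x_*)\times(t_*-r^2+\eta,t_*+\eta)$, the infimum of $u$ there equals $0$, and the weak Harnack inequality gives $u=0$ a.e.~on $B_r(x_*)\times(t_*-4r^2+\eta,t_*-3r^2+\eta)$. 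Letting $\eta\to 0^+$ and unioning produces $u=0$ a.e.~on $C_r$, and Proposition~\ref{prop-essliminf-reg} promotes this to pointwise equality on $C_r$.

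Next, taking the union of these cylinders over all admissible $r\in(0,r_0)$ for sufficiently small $r_0$ and solving $|x-x_*|<r$ together with $(t_*-t)/4<r^2<(t_*-t)/3$ for $r$ shows that $u\equiv 0$ on the backward parabolic region
\[
  P_{\xi_*} := \bigl\{(x,t) : |x-x_*|^2 < \tfrac13(t_*-t),\ 0 < t_*-t < 3 r_0^2\bigr\}.
\]

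To conclude, fix $\xi\in\Lambda$ and a polygonal path $\gamma\colon[0,1]\to\Theta$ from $\xi=\gamma(0)$ to $\xi_0=\gamma(1)$ with strictly increasing time. Let $Z=\{\zeta\in\Theta:u(\zeta)=0\}$, which is closed in $\Theta$ because $u$ is lower semicontinuous and non-negative, and let $S=\gamma^{-1}(Z)\subset[0,1]$, so $S$ is closed and contains $1$. Set $s^*=\inf S\in S$, and suppose for contradiction that $s^*>0$. Writing $\xi^*=\gamma(s^*)=(x^*,t^*)$, the previous step gives $u\equiv 0$ on $P_{\xi^*}$. On the linear piece of $\gamma$ ending at $s^*$, we have $\gamma(s)=\xi^*+(s-s^*)(v_x,v_t)$ with $v_t>0$, so for $s<s^*$
\[
  |x(s)-x^*|^2=(s^*-s)^2|v_x|^2, \qquad t^*-t(s)=(s^*-s)v_t,
\]
and the condition $|x(s)-x^*|^2<(t^*-t(s))/3$ reduces to $(s^*-s)|v_x|^2<v_t/3$, which holds throughout some interval $(s^*-\delta,s^*)$. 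Hence $\gamma(s)\in P_{\xi^*}\subset Z$ on that interval, contradicting $s^*=\inf S$. Therefore $s^*=0$, so $\xi\in Z$; this shows $\Lambda\subseteq Z$, and since $Z$ is closed in $\Theta$ we conclude $\bar\Lambda\cap\Theta\subseteq Z$.

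The main obstacle is the first step: the weak Harnack alone places vanishing only on a cylinder strictly below $\xi_*$ in time, leaving a gap up to $\xi_*$. Closing this gap by letting $r$ vary and unioning yields the parabolic region $P_{\xi_*}$, which reaches up to $\xi_*$ along every direction with strictly positive time component, and it is precisely this tangency that makes the polygonal-path argument succeed regardless of the space-time slope of $\gamma$.
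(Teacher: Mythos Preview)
Your proof is correct and follows essentially the same strategy as the paper's: take the infimum $s^*$ of the parameters along the path where $u\circ\gamma$ vanishes, then use the weak Harnack inequality to propagate the zero backward in time and derive a contradiction if $s^*>0$. The only cosmetic differences are that you time-shift by $\eta$ to place $\xi_*$ strictly inside the later cylinder (the paper instead invokes Proposition~\ref{prop-essliminf-reg} to get $\inf u=0$ there) and that you make the backward region $P_{\xi_*}$ explicit rather than choosing a single suitable $r$ directly.
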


\begin{proof}
First, let $\xi \in \Lambda$,
and let $\ga: [0,1] \to \Theta$ be a polygonal path from $\xi=\ga(0)$ to 
$\xi_0= \ga(1)$
along which the time variable is strictly increasing.
Also let
\[
\sigma =\inf \{s\in [0,1] : u(\ga(s))=0\}.
\]
By the lower semicontinuity of $u$, we see that $u(\ga(\sigma))=0$.
For simplicity we assume that $\ga(\sigma)=(0,0)$.

If $\sigma>0$, then 
there is $0<s< \sigma$ and $r>0$ such that
\[
B_{2r}\times(-4r^2,0) \subset \Theta \quad \text{and} \quad
   \ga(s) \in B_r \times  (-4r^2,-3r^2).
\]
It then follows from the weak Harnack inequality
(Theorem~\ref{thm-weak-Harnack-superpara}) 
together with  Proposition~\ref{prop-essliminf-reg} that
\begin{align*}
\biggl(\vint_{B_r\times (-4r^2,-3r^2)} u^q\, dx\, dt \biggr) ^{1/q}
     &\le C \inf_{B_r\times (-r^2,0)} u \\
     &\le C \essliminf_{\substack{(x,t) \to (0,0) \\ t < 0}} 
         u( x,t) =  C u (0,0) =0.
\end{align*}
Thus $u =0$ a.e.\ in $B_r\times (-4r^2,-3r^2)$. 
Since $u \ge 0$ is lower semicontinuous it must be identically
$0$  therein.
In particular $u(\ga(s))=0$, but this contradicts the fact that $s < \sigma$.
Hence $\sigma =0$ and $u(\xi)=u(\ga(\sigma))=0$.

Finally, as $u \ge 0$ is lower semicontinuous it follows that
$u \equiv 0$ in $\bar{\Lambda} \cap \Theta$.
\end{proof}

\section{Perron solutions and boundary regularity}

\emph{In this section we assume that $\Theta \subset \R^{n+1}$ is a bounded open set.}

\medskip

Perhaps the most general method to solve the Dirichlet problem
in arbitrary bounded domains is the Perron method.
For us it will be enough to consider Perron solutions for
bounded functions, so for simplicity we restrict ourselves
to this case throughout  the rest of this paper.

\begin{definition}   \label{def-Perron}
Given a bounded function $f \colon \bdy \Theta \to \R$,
let the upper class $\UU_f$ be the set of all
superparabolic  functions $u$ on $\Theta$ such that
\[ 
    \liminf_{\Theta \ni \eta \to \xi} u(\eta) \ge f(\xi) \quad \text{for all }
    \xi \in \bdy \Theta.
\] 
Define the \emph{upper Perron solution} of $f$  by
\[
    \uP_{\Theta} f (\xi) = \inf_{u \in \UU_f}  u(\xi), \quad \xi \in \Theta.
\]
Similarly, let the lower class $\LL_f$ be the set of all subparabolic functions  
$ {v}$ on $\Theta$
 such that
\[ 
    \limsup_{\Theta \ni \eta \to \xi}  v(\eta) \le f(\xi) \quad \text{for all }
    \xi \in \bdy \Theta 
\] 
and define the \emph{lower Perron solution}  of $f$ by
\[
  \lP_{\Theta} f(\xi) = \sup_{v \in \LL_f}   v(\xi), \quad \xi \in \Theta.
\]
\end{definition}

It follows directly from the elliptic comparison
principle (Theorem~\ref{thm-elliptic-comp}) that
we always have $\lP f  \le \uP f$.
Moreover, $\lP f$ and $\uP f$ are viscosity solutions,
see Theorem~3.12 in Banerjee--Garofalo~\cite{banerjeeg15}  
and also Section~2.4 in Giga~\cite{giga06}.
When the Perron solution is taken with respect to $\Theta$
we often drop  $\Theta$ from the notation.

\begin{definition}
\label{def:regular}
A boundary point $\xi_0\in \partial \Theta$ is 
\emph{regular} with respect to $\Theta$  if
\[
        \lim_{\Theta \ni \xi \to \xi_0} \uP f(\xi)=f(\xi_0)
\quad  \text{whenever $f \in C(\partial \Theta)$.}
\]
\end{definition}

Since $\lP f = - \uP (-f)$, boundary regularity can equivalently
be formulated using lower Perron solutions. 

\begin{deff}
A function $w$ is a barrier in $\Theta$ at the point $\xi_0 \in \bdy \Theta$
if
\begin{enumerate}
\renewcommand{\theenumi}{\textup{(\roman{enumi})}}%
\item 
$w$ is a positive superparabolic function in $\Theta$;
\item
$\lim_{\Theta \ni \zeta \to \xi_0} w(\zeta) =0$;
\item \label{barrier-ii}
$\liminf_{\Theta \ni \zeta \to \xi} w(\zeta)  >0$
for every $\xi \in \bdy \Theta \setm \{\xi_0\}$.
\end{enumerate}
\end{deff}

Banerjee--Garofalo~\cite[Section~4]{banerjeeg15}
obtained a number of  results 
about boundary regularity which we summarize as follows.
(Part \ref{k-Thetaminus} follows from \ref{k-local} together
with Proposition~4.7 and Theorem~4.8 in \cite{banerjeeg15}.)

\begin{thm} \label{thm-barrier-and-more}
Let $\xi_0=(x_0,t_0) \in \bdy \Theta$ and let 
$\Theta_{-}=\{(x,t) \in \Theta : t < t_0\}$.
\begin{enumerate}
\item \label{k-barrier-char}
$\xi_0$ is regular if and only if there is a barrier at $\xi_0$.
\item \label{k-local}
Regularity is a local property, i.e.\ if $U$ is  an open
neighbourhood of $\xi_0$,
then $\xi_0$ is regular with respect to $\Theta$ if and only if it is regular
with respect to $\Theta \cap U$.
\item \label{k-Thetaminus}
$\xi_0$ is regular with respect to $\Theta$ if and only if
either $\xi_0 \notin \bdy \Thetaminus$ or
$\xi_0$ is regular with respect to $\Thetaminus$.
\item 
If $\xi_0$ is regular and $f: \bdy \Theta \to \R$ 
is a bounded function which is continuous at $\xi_0$,
then
\[
     \lim_{\Theta \ni \xi \to \xi_0} \lP f(\xi) 
   = \lim_{\Theta \ni \xi \to \xi_0} \uP f(\xi) 
   = f(\xi_0).
\]
\end{enumerate}
\end{thm}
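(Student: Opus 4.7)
The plan is to address the four items in the order (a), (b), (d), (c), treating (d) as the main substantive step while invoking (a) and (b) essentially as quoted from~\cite[Theorem~4.5 and Proposition~4.7]{banerjeeg15}; (c) then follows by combining (b) with a direct barrier construction.

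For (a), I would invoke the barrier characterization of~\cite[Theorem~4.5]{banerjeeg15} directly. Conceptually, the forward direction is obtained by taking $w = \uP f$ with $f(\xi) = \min\{|\xi-\xi_0|,1\}$: regularity forces $w(\xi_0)=0$, while the strong minimum principle (Theorem~\ref{thm-strong-min-princ}) rules out $w$ vanishing on a set reaching interior points in its past-connected component, giving positivity. The reverse direction is subsumed in (d). For (b), locality follows from (a) via the pasting Lemma~\ref{lem-pasting}: a barrier on $\Theta\cap U$ is truncated at a small level $\delta>0$ and extended by the constant $\delta$ on $\Theta\setm U'$ for $U'\Subset U$, producing a barrier on all of $\Theta$.

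The bulk of the work is (d). Given a barrier $w$ at $\xi_0$ (from (a)), fix $\eps>0$ and let $M=\sup_{\bdy\Theta}|f|$. Continuity of $f$ at $\xi_0$ supplies an open neighbourhood $U$ of $\xi_0$ on which $|f-f(\xi_0)|<\eps$, while barrier condition (iii) together with lower semicontinuity and compactness of $\bdy\Theta\setm U$ yields $m>0$ with $\liminf_{\Theta\ni\eta\to\xi}w(\eta)\ge m$ for every $\xi\in\bdy\Theta\setm U$. Setting $C=2M/m$, the functions
\[
u_\eps := f(\xi_0)+\eps+Cw, \qquad v_\eps := f(\xi_0)-\eps-Cw,
\]
are superparabolic and subparabolic respectively, and a short case analysis over $\xi\in U\cap\bdy\Theta$ versus $\xi\in\bdy\Theta\setm U$ shows $u_\eps\in\UU_f$ and $v_\eps\in\LL_f$. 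By the definition of Perron solutions, $v_\eps\le \lP f\le \uP f\le u_\eps$ in $\Theta$. Letting $\xi\to\xi_0$, so that $w\to 0$ by barrier condition (ii), and then $\eps\to 0$, proves (d).

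For (c), if $\xi_0\in\bdy\Thetaminus$ the equivalence follows by combining (b) with \cite[Proposition~4.7 and Theorem~4.8]{banerjeeg15}, as indicated in the statement. If $\xi_0\notin\bdy\Thetaminus$, there is a neighbourhood $U$ of $\xi_0$ with $\Theta\cap U\subset\{t>t_0\}$, and I would exhibit the explicit smooth barrier
\[
w(x,t) = \alpha(t-t_0)+|x-x_0|^2, \qquad \alpha\ge 2(n+p-2),
\]
on $\Theta\cap U$. Since $\nabla w=2(x-x_0)$ and $D^2w=2I$, one checks $\Delta_\infty^N w=2$, so that Proposition~\ref{prop-classical} reduces superparabolicity to $\alpha-2n-2(p-2)\ge 0$ where $\nabla w\ne 0$ and $w_t=\alpha\ge 0$ on the critical line $\{x=x_0\}$ (where $D^2 w>0$). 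As $w(\xi_0)=0$ and $w>0$ elsewhere on $\overline{\Theta\cap U}$, this is a barrier, and locality (b) promotes it to regularity in $\Theta$. The principal technical obstacle is the uniform lower bound on $w$ outside $U$ used in (d), which hinges on lower semicontinuity and compactness; everything else is either routine or quoted from~\cite{banerjeeg15}.
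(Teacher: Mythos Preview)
The paper does not give a proof of this theorem; it is stated as a summary of results from Banerjee--Garofalo~\cite[Section~4]{banerjeeg15}, with only the parenthetical remark that part~(c) follows from~(b) together with \cite[Proposition~4.7 and Theorem~4.8]{banerjeeg15}. Your proposal is fully consistent with this, citing \cite{banerjeeg15} for (a) and (b) and for the case $\xi_0\in\bdy\Thetaminus$ in (c), and your additional arguments---the standard barrier comparison for (d) and the explicit barrier $w(x,t)=\alpha(t-t_0)+|x-x_0|^2$ for the ``earliest point'' case of (c)---are correct and simply fill in detail the paper omits.
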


In particular, part \ref{k-Thetaminus} implies that a first point
is always regular, because in this case  $\Thetaminus= \emptyset$.
Another important consequence of the barrier characterization
is the following 
\emph{restriction} property.

\begin{prop} \label{prop-restrict}
Let $\xi_0 \in \bdy \Theta$ and let $U \subset \Theta$ be open and such
that $\xi_0 \in \bdy U$.
If $\xi_0$ is regular with respect to $\Theta$,
then $\xi_0$ is regular with respect to $U$.
\end{prop}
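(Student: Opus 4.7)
The plan is to invoke the barrier characterization from Theorem~\ref{thm-barrier-and-more}\,\ref{k-barrier-char}: since $\xi_0$ is regular with respect to $\Theta$, there exists a barrier $w$ at $\xi_0$ in $\Theta$. I will show that the restriction $w|_U$ is a barrier at $\xi_0$ in $U$, which by the same characterization forces $\xi_0$ to be regular with respect to $U$.

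Properties (i) and (ii) in the definition of a barrier are essentially inherited for free. Superparabolicity is a local property, so the restriction of the superparabolic function $w$ to the open subset $U\subset\Theta$ remains superparabolic, and positivity is preserved by restriction. Condition (ii), $\lim_{U\ni\zeta\to\xi_0} w(\zeta) = 0$, follows immediately from the corresponding limit in $\Theta$ because any net in $U$ converging to $\xi_0$ is a net in $\Theta$.

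The real content is verifying condition (iii) at an arbitrary $\xi \in \bdy U \setm \{\xi_0\}$, and here I would split into two cases according to whether $\xi$ lies in $\Theta$ or in $\bdy\Theta$. Since $U \subset \Theta$, we have $\bdy U \subset \overline{\Theta} = \Theta \cup \bdy \Theta$, so these cases exhaust all possibilities. If $\xi \in \Theta$, then $w$ is defined and positive at $\xi$, and by lower semicontinuity of $w$ we get $\liminf_{U\ni\zeta\to\xi}w(\zeta) \ge w(\xi)>0$. If instead $\xi \in \bdy\Theta$, then $\xi \ne \xi_0$ and any net in $U$ converging to $\xi$ is in $\Theta$, so
\[
\liminf_{U\ni\zeta\to\xi}w(\zeta)\ \ge\ \liminf_{\Theta\ni\zeta\to\xi}w(\zeta)\ >\ 0
\]
by the defining property of $w$ as a barrier at $\xi_0$ in $\Theta$.

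I expect no real obstacle here; the only point requiring care is remembering that a boundary point of $U$ may well be an interior point of $\Theta$, which is precisely why one needs the lower semicontinuity of $w$ to handle the first case. Once both cases are dispatched, $w|_U$ is a barrier at $\xi_0$ in $U$ and Theorem~\ref{thm-barrier-and-more}\,\ref{k-barrier-char} closes the argument.
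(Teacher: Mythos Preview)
Your proposal is correct and follows exactly the same approach as the paper: take a barrier $w$ in $\Theta$ via Theorem~\ref{thm-barrier-and-more}\,\ref{k-barrier-char}, restrict it to $U$, and verify the barrier conditions there. The paper's proof is terser (it simply says ``as $w$ is lower semicontinuous and positive it follows directly that $w|_U$ is a barrier''), whereas you have spelled out the case split for condition~(iii) that the paper leaves implicit.
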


\begin{proof}
By Theorem~\ref{thm-barrier-and-more}\,\ref{k-barrier-char} there is a 
barrier $w$ in $\Theta$ at $\xi_0$. 
As $w$ is lower semicontinuous and positive it follows
directly that $w|_U$ is a barrier with respect to $U$.
Thus Theorem~\ref{thm-barrier-and-more}\,\ref{k-barrier-char} 
implies that $\xi_0$ is a regular boundary point with respect to $U$.
\end{proof}

\section{The tusk condition}

\begin{definition}
\label{def-tusk}
A \emph{tusk} at $\xi_0=(0,0)\in\partial\Theta$ is a set in $\Rno$ of the form
\[
V:= \{(x,t)\in\R^{n+1}: -T<t<0 \text{ and } |x-(-t)^{1/2} \xh|^2 < R^2(-t)\},
\]
for some $ \xh\in\Rn$ 
and with positive constants $R$ and $T$, see Figure~\ref{Fig:1}.
We say that $\Theta$ satisfies the \emph{tusk condition} 
at $\xi_0=(0,0)\in\partial\Theta$ if there is a tusk $V$  {at $\xi_0$}
with  $V\cap \Theta =\emptyset$. 

At points $(0,0)\neq \xi_0\in\bdry\Theta$, the definition is analogous 
except that we use translations of $V$.
\end{definition} 

It is well known that if $\xi_0$ satisfies the tusk condition
then $\xi_0$ is regular for the heat equation,
see Effros--Kazdan~\cite{effrosk71}, which refers to $\xi_0$ as being 
\emph{parabolically touchable}, and 
Lieberman~\cite{lieberman89}. 
We extend this result to the normalized \p-parabolic equation. 
Compared to \cite{effrosk71}, we do not establish a counterpart 
of their Lemma~1, but use the iterative argument directly
together with the parabolic comparison principle.
We also improve on their result (also for the heat equation) by
showing H\"older continuity.

To start with, we prove an auxiliary exterior ball condition.
We let $B(\zeta,R)=\{\xi \in \R^{n+1} : |\zeta - \xi| < R\}$ denote
a ball in $\R^{n+1}$.

\begin{lem}
\label{lem:exterior-ball-condition-first}
\textup{(Exterior ball condition, preliminary version)}
Let $\xi_0=( x_0,t_0)\in\partial\Theta$.
Suppose that there exists a  ball $B=B(\xi_1,R_1)$, $\xi_1=(x_1,t_1)$,
such that $B \cap \Theta = \emptyset$ and
$\xi_0 \in \bdy B \cap \bdy \Theta$.
If $x_1\ne x_0$, or if $\xi_0$ is the north pole
of $B$ \textup{(}i.e.\ $\xi_1=(x_0,t_0-R_1)$\textup{)}
and the additional radius condition $R_1>n+p-2$ is satisfied,
then $\xi_0$ is regular with respect to $\Theta$.
\end{lem}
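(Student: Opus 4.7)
The plan is to construct an explicit barrier at $\xi_0$ and then invoke the barrier characterization together with locality of regularity (Theorem~\ref{thm-barrier-and-more}\,\ref{k-barrier-char},\,\ref{k-local}). The natural spherically symmetric candidate $R_1^{-\alpha}-|\xi-\xi_1|^{-\alpha}$ vanishes on the whole sphere $\bdy B$, which may meet $\bdy\Theta$ in points other than $\xi_0$. To isolate $\xi_0$ I would first replace $B$ by the internally tangent, strictly smaller ball $B':=B(\xi_1',R_1')$ with $\xi_1':=\xi_1+\delta(\xi_0-\xi_1)/R_1$ and $R_1':=R_1-\delta$ for a small $\delta>0$. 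Elementary geometry gives $B'\subset B$, tangency of $\bdy B'$ and $\bdy B$ at $\xi_0$, and $\bdy B'\setminus\{\xi_0\}\subset B$. Combined with the observation that $B$ open and $B\cap\Theta=\emptyset$ force $\bar\Theta\cap B=\emptyset$, this yields $\bar\Theta\cap\overline{B'}\subset\{\xi_0\}$ and $\bdy\Theta\cap\bdy B'=\{\xi_0\}$, which are the key separation properties.

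With $B'$ fixed, set
\[
  w(\xi):=(R_1')^{-\alpha}-|\xi-\xi_1'|^{-\alpha},
\]
and let $r:=|\xi-\xi_1'|$ and $\mu:=|x-x_1'|^2/r^2\in[0,1]$. A direct computation of $\Delta w$ and $\Delta_\infty^N w$ in $\R^{n+1}$ gives, wherever $\nabla w\ne 0$,
\[
  w_t-\Delta_p^N w \;=\; \alpha r^{-\alpha-2}\bigl[(t-t_1')+(p-1)(\alpha+2)\mu-(n+p-2)\bigr].
\]
At $\xi_0$ the bracket equals $(t_0-t_1)(1-\delta/R_1)+(p-1)(\alpha+2)|x_0-x_1|^2/R_1^2-(n+p-2)$. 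If $x_1\ne x_0$ then $|x_0-x_1|>0$, so for $\alpha$ large enough the bracket is strictly positive regardless of $R_1$. If $\xi_0$ is the north pole then $x_1'=x_0$, the $\alpha$-term vanishes, and the bracket reduces to $(R_1-\delta)-(n+p-2)$, which is strictly positive for small $\delta$ precisely under the hypothesis $R_1>n+p-2$. By continuity the bracket remains nonnegative on some $\Theta\cap U$, where $U$ is a small neighborhood of $\xi_0$.

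The only subtlety left is the degenerate locus $\nabla w=0$, i.e.\ $\{x=x_1'\}$; this meets $U$ only in the north pole case, since otherwise $x_1'\ne x_0$ and one may shrink $U$. There $D^2 w=\alpha r^{-\alpha-2} I$ is positive definite, so Proposition~\ref{prop-classical} demands $w_t\ge 0$; and $w_t=\alpha r^{-\alpha-2}(t-t_1')$ with $t_1'=t_0-R_1+\delta<t_0$, so $w_t>0$ near $\xi_0$. Proposition~\ref{prop-classical} thus certifies $w$ as a viscosity supersolution in $\Theta\cap U$. Positivity of $w$ on $\Theta\cap U$ and strict positivity of $w$ at every point of $\bdy(\Theta\cap U)\setminus\{\xi_0\}$ follow from the separation properties established above, while $w(\xi_0)=0$ is tautological. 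Hence $w$ is a barrier at $\xi_0$ for $\Theta\cap U$ and Theorem~\ref{thm-barrier-and-more}\,\ref{k-barrier-char},\,\ref{k-local} yields that $\xi_0$ is regular with respect to $\Theta$.

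The main obstacle is the north pole case: the spatial gradient of $w$ degenerates along the entire time line $\{x=x_0\}$, and because $\mu_0=0$ the $(\alpha+2)\mu$ term disappears from the bracket, so the quantitative hypothesis $R_1>n+p-2$ is structurally necessary and one must additionally invoke Proposition~\ref{prop-classical} to handle the degeneracy. The lateral case is in comparison routine since $\mu_0>0$ gives full control by taking $\alpha$ large.
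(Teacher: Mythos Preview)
Your proof is correct and follows essentially the same strategy as the paper: shrink the exterior ball so that its boundary touches $\bdy\Theta$ only at $\xi_0$, build a radially symmetric barrier centered at the ball's center, verify the supersolution property via Proposition~\ref{prop-classical} by splitting into the lateral case (where a large parameter makes the key bracket positive) and the north-pole case (where the radius hypothesis $R_1>n+p-2$ is needed and the degenerate line $\{x=x_0\}$ must be handled separately), and then conclude by the barrier characterization and locality. The only difference is cosmetic: the paper uses the Gaussian profile $w(\xi)=e^{-jR_1^2}-e^{-j|\xi-\xi_1|^2}$ with large $j$, whereas you use the power profile $w(\xi)=(R_1')^{-\alpha}-|\xi-\xi_1'|^{-\alpha}$ with large $\alpha$; both are standard barrier shapes and lead to the same structural computation.
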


In Proposition~\ref{prop:exterior-ball-condition}
we will remove the above restriction 
on the radius in the north pole case.

\begin{proof} 
For simplicity, we assume that $\xi_0=(0,0)$.
By choosing a smaller ball, if necessary, we may  without loss of generality 
assume that 
$\bdy B \cap \bdy \Theta=\{ \xi_0\}$. 
For $\xi=(x,t)$ define
\[
w(\xi) =e^{-j R_1^2}-e^{-j R^2},
\]
where $R=|\xi-\xi_1|$ 
and  $R_1=|\xi_1|$, while
$j$ will be chosen later.
Then $w>0$ in $\overline{\Theta}\setm\{\xi_0\}$ and 
$\lim_{\xi\to\xi_0} w(\xi)=0$.
Elementary calculations show that
\begin{align*}
w_t(x,t) & = 2j e^{-j R^2} (t-t_1), \\
\nabla w(x,t) & = 2je^{-jR^2}(x-x_1), \\
\Delta_p w(x,t) &= (2j)^{p-1} |x-x_1|^{p-2} e^{-j(p-1)R^2} [n+p-2-2j(p-1)|x-x_1|^2],
\end{align*}
and, provided that $\nabla w(x,t) \ne 0$,
\[
\Delta^N_p w(x,t) 
= 2j e^{-jR^2} [n+p-2 - 2j(p-1)|x-x_1|^2 ].
\]
Thus, still assuming that $\nabla w(x,t) \ne 0$,
\begin{equation}    \label{eq-calc-for-w}
\Delta^N_p w(x,t) -  w_t (x,t)
= 2j e^{-jR^2} [n+p-2 - 2j(p-1)|x-x_1|^2 - (t-t_1)].
\end{equation}
To show that $w$ is superparabolic, we need to show that the last bracket
is nonpositive.
Since regularity is a local property by 
Theorem~\ref{thm-barrier-and-more}\,\ref{k-local}, 
we may restrict our considerations to a small neighbourhood of $\xi_0$.

If $x_1\ne0$   then, 
in view of Theorem~\ref{thm-barrier-and-more}\,\ref{k-local}, we can
assume that $(x,t) \in \Theta$ satisfy $|x|,|t|<\de:=\tfrac12|x_1|$.
In particular, $|x-x_1|>\de$, $\nabla w(x,t) \ne 0$
and $t_1-t<t_1+\de$. 
Hence we can choose $j$ so that the bracket in~\eqref{eq-calc-for-w}
is nonpositive and  thus
$\Delta^N_p w(x,t) - w_t(x,t) \le0$ for all such $x$ and $t$.
By Proposition~\ref{prop-classical}, we get that $w$ is superparabolic.

If, on the other hand, $x_1=0$ and $t_1=-R_1$, then 
we can assume that $t>n+p-2-R_1$ (which is negative by assumption)
 whenever $(x,t) \in \Theta$.
In particular, $w_t(x,t)>0$.
Moreover, if $\nabla w(x,t) \ne 0$, then
\[
\Delta^N_p w(x,t) -  w_t(x,t) 
\le 2j e^{-jR^2} [n+p-2-R_1-t] <0.
\]
Hence $w$ is superparabolic, by Proposition~\ref{prop-classical}.
\end{proof}

\begin{figure}[t]
\begin{center}
\includegraphics[width=.5\textwidth]{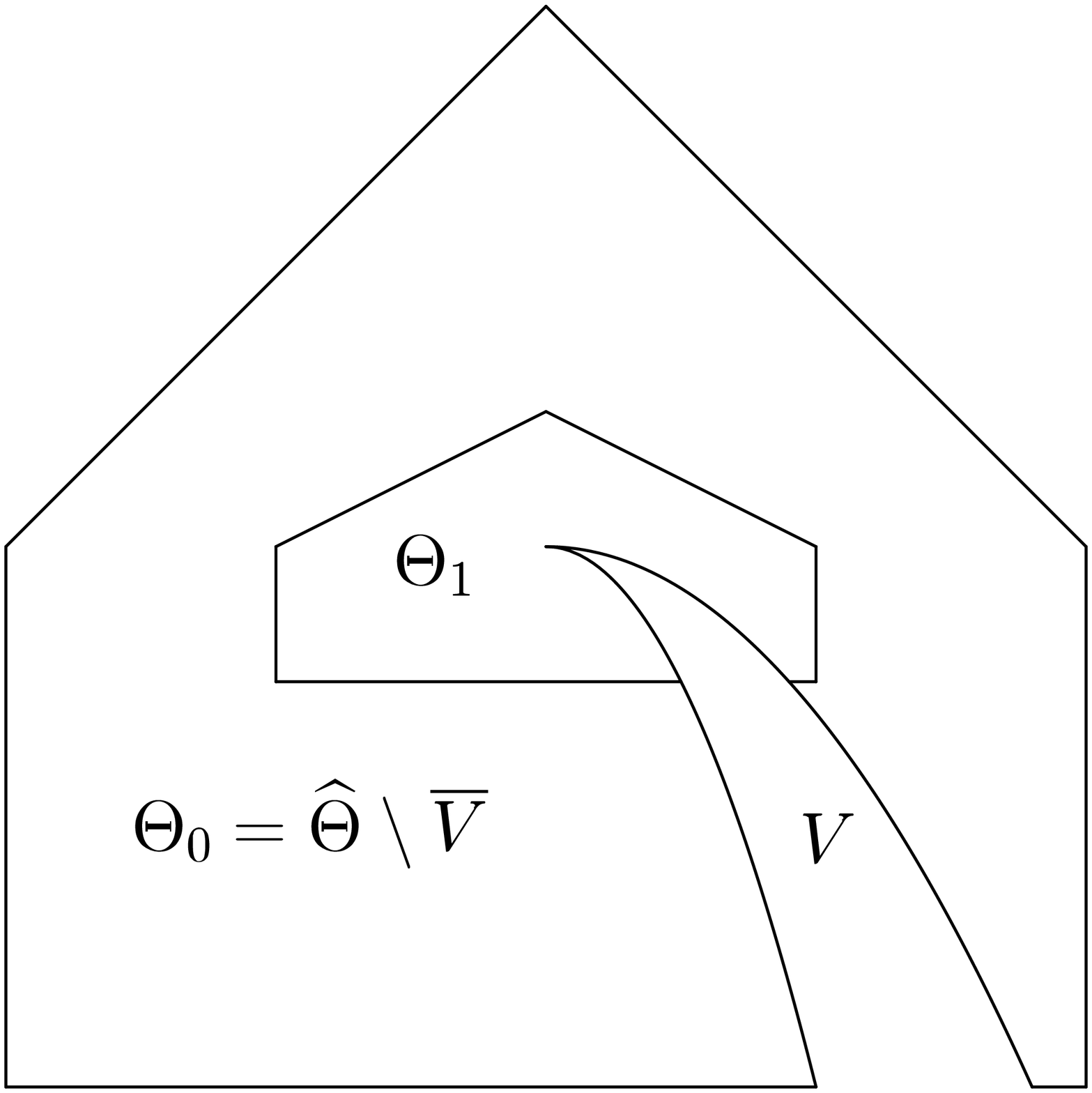}
\end{center}
\caption{The domains in Lemma~\ref{lem-tusk-house-barrier}.}\label{Fig:1}
\end{figure}

\begin{lem}   \label{lem-tusk-house-barrier}
Let $V$ be a tusk at $\xi_0=(0,0)$, determined by $T=1$, $R$ and $\xh$.
Then $\xi_0$ is regular with respect to the domain $\Theta_0=\Thetah\setm\clV$,
where
\[
\Thetah = ( B_{R_0}\times (-1,0])\cup \{(x,t)\in\R^{n+1}: |x|<R_0(1-t)
   \text{ and } 0 \le t<1\}
\]
for some $R_0>|\xh|+R$, see Figure~\ref{Fig:1}. 
Moreover, the  viscosity solution
$u:=\uP_{\Theta_0} f$, with
\[
f(x,t) = \begin{cases}
        -t, & \text{if } (x,t) \in \bdry\Theta_0 \cap \bdry V, \\ 
         1, & \text{if } (x,t) \in \bdry\Theta_0 \setm \bdry V,
        \end{cases}
\]
is a positive continuous barrier in $\Theta_0$, 
which is H\"older continuous at $\xi_0$
and continuously attains its boundary values $f$ everywhere on $\bdry\Theta_0$.
\end{lem}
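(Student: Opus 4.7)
First I would verify that every boundary point of $\Theta_0$ other than $\xi_0$ is regular. The base $B_{R_0}\times\{-1\}$ consists of first points, so Theorem~\ref{thm-barrier-and-more}\,\ref{k-Thetaminus} yields regularity. The cylindrical wall $\partial B_{R_0}\times(-1,0)$ and the conical wall $|x|=R_0(1-t)$ with $0<t<1$ admit exterior open balls with spatially displaced centre. The cone tip $(0,1)$ admits a north-pole exterior ball of arbitrary radius (any $B((0,1+R_1),R_1)$ lies above $\Thetah$), so choosing $R_1>n+p-2$ makes Lemma~\ref{lem:exterior-ball-condition-first} applicable. On the lateral tusk surface $\partial V\setminus\{\xi_0\}$ the defining function $\Phi(x,t)=|x-(-t)^{1/2}\xh|^2-R^2(-t)$ is $C^\infty$ with non-vanishing spatial gradient, so at each such point an open ball contained in $V$ with spatially displaced centre touches $\partial V$ and lies outside $\Theta_0$; Lemma~\ref{lem:exterior-ball-condition-first} applies again.

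The boundary datum $f$ is bounded in $[0,1]$, continuous on $\partial\Theta_0$, and satisfies $f(\xi_0)=0$. Perron theory combined with Theorem~\ref{thm-barrier-and-more}\,(d) then yields that $u=\uP_{\Theta_0}f$ is a viscosity solution in $\Theta_0$ belonging to $C(\overline{\Theta_0}\setminus\{\xi_0\})$ with $u=f$ on $\partial\Theta_0\setminus\{\xi_0\}$. The inclusions $0\in\LL_f$ and $1\in\UU_f$ give $0\le u\le 1$. For strict positivity in $\Theta_0$ I would argue by contradiction using the strong minimum principle (Theorem~\ref{thm-strong-min-princ}): if $u(\zeta)=0$ at some $\zeta\in\Theta_0$, then $u$ vanishes on the closure in $\Theta_0$ of the set of points joined to $\zeta$ by a polygonal path with strictly increasing time. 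Prolonging such a path backwards in time one reaches a regular boundary point where $f>0$, contradicting the continuity of $u$ there.

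The decisive ingredient is Hölder decay of $u$ at $\xi_0$, which also delivers condition (ii) of the barrier definition. Here I would exploit that under the parabolic scaling $S_\la(x,t)=(\la x,\la^2 t)$ one has $S_\la(V)\subset V$ and $S_\la(\Theta_0)\subset\Theta_0$ for every $\la\in(0,1)$, together with the invariance of \eqref{eq:normalized-p-parabolic} under $S_\la$ and under multiplication by positive constants. Setting $\omega(r):=\sup\{u(\zeta):\zeta\in\Theta_0\cap B^*(\xi_0,r)\}$ for the parabolic quasi-ball $B^*(\xi_0,r)=\{(x,t):|x|<r,\ -r^2<t<r^2\}$, I would compare $u$ with the rescaled viscosity solution $\bar u(x,t):=u(\la x,\la^2 t)$ (defined on $S_\la^{-1}(\Theta_0)\supset\Theta_0$) via the parabolic comparison principle (Theorem~\ref{thm-parabolic-comp}). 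On $\partial V\cap\overline{B^*(\xi_0,r)}$ one has $u=f=-t\le r^2$, while on the lateral part of $\partial B^*(\xi_0,r)\cap\overline{\Theta_0}$ the strict positivity of $u$ away from $\partial V$ (from the previous step) gives a uniform lower bound on $\bar u$. A suitable choice of $\la$ should then produce a contraction
\[
\omega(\la r)\le\theta\,\omega(r)+Cr^2,\qquad\theta\in(0,1),
\]
uniform in $r$. Iterating yields $\omega(\la^k r_0)\le C'\la^{k\alpha}$ for some $\alpha>0$, hence $u(\zeta)\le C|\zeta-\xi_0|_{\mathrm{par}}^\alpha$ near $\xi_0$, which is the Hölder decay and in particular gives $u(\zeta)\to 0=f(\xi_0)$.

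The principal obstacle is establishing the contraction factor $\theta<1$: while the scale invariance of the tusk makes $\bar u$ a valid comparison solution on $\Theta_0$, converting the strict positivity provided by the strong minimum principle into a uniform gap between $u$ and $\bar u$ on the lateral part of $\partial B^*(\xi_0,r)$ requires a careful choice of $\la$ and of the comparison cylinder. This is the step---absent from the classical heat-equation treatment of Effros--Kazdan---in which the parabolic comparison principle of Theorem~\ref{thm-parabolic-comp} substitutes for their removability lemma and in which the nonlinear character of \eqref{eq:normalized-p-parabolic} is felt.
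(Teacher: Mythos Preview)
Your outline is on the right track and shares the paper's overall strategy: exterior-ball regularity at every boundary point other than $\xi_0$, strict positivity of $u$ via the strong minimum principle, and then a scaling-plus-comparison argument for the decay at $\xi_0$. The gap lies in the last step, precisely where you locate the ``principal obstacle'', and your setup there actually points in the wrong direction.

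With $\la\in(0,1)$ and $\bar u(x,t)=u(\la x,\la^2 t)$, the comparison does not yield decay: on $\bdry V$ one has $\bar u=-\la^2 t=\la^2 u\le u$, and on $\bdry\Thetah$ the point $(\la x,\la^2 t)$ lies in the interior of $\Theta_0$, so $\bar u<1=u$ there as well. The elliptic comparison principle thus gives only $\bar u\le u$ on $\Theta_0$, i.e.\ monotonicity of $u$ under dilations towards $\xi_0$, not a contraction. Recasting the iteration in terms of the parabolic balls $B^*(\xi_0,r)$ does not repair this: the ``uniform lower bound on $\bar u$'' you invoke on the lateral part of $\bdry B^*(\xi_0,r)$ yields at best $u\le(\omega(r)/c)\,\bar u$ there, and the resulting recursion $\omega(\la r)\le(\omega(r)/c)\,\omega(\la^2 r)+Cr^2$ is nonlinear and does not close.

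The paper scales in the opposite direction. It sets $v(x,t)=u(2x,4t)$ and compares on the \emph{smaller} domain $\Theta_1=\Thetah_1\setm\clV$, where $\Thetah_1=\{(x,t):(2x,4t)\in\Thetah\}$. The observation that dissolves your ``principal obstacle'' is purely geometric: the set
\[
K:=\overline{\bdry\Theta_1\setm\bdry V}
\]
is a compact subset of $\overline{\Theta}_0$ not containing $\xi_0$, and on $K$ one has $v\equiv1$ (since $(2x,4t)\in\bdry\Thetah$, where $f=1$). Continuity of $u$ on $K$ together with the strict inequality $u<1$ then gives $\al_1:=\sup_K u<1$ immediately---no uniform Harnack-type estimate is needed. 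On $\bdry\Theta_1\cap\bdry V$ one has $u=-t=\tfrac14 v$. Hence $u\le\al v$ on all of $\bdry\Theta_1$ with $\al=\max\{\al_1,\tfrac14\}<1$, and the parabolic comparison principle on $\Theta_1^-$ yields $u\le\al v$ there. Iteration gives $u\le\al^k$ on $\Theta_k\setm\Theta_{k+1}$, which is the H\"older bound. The dyadic domains $\Theta_k$ already carry the right geometry; parabolic balls are not used.
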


Here and below, we mean H\"older continuity with respect
to parabolic scaling, i.e.\ $g$ is \emph{H\"older continuous}
at $(0,0)$ with H\"older exponent $\beta$ if
\[
   |g(x,t)-g(0,0)| \le C(|x|+|t|^{1/2})^\beta.
\]

\begin{proof}
Continuity of $u$ within $\Theta_0$ is clear since it is 
a viscosity solution therein. 
By the exterior ball condition (Lemma~\ref{lem:exterior-ball-condition-first}),
all $(x,t)\in\bdry\Theta_0\setm\{\xi_0\}$ are regular and hence
\begin{equation} \label{eq-reg-lim}
\lim_{\xi\to(x,t)} u(\xi) = f(x,t) >0 
\quad \text{for all } (x,t)\in\bdry\Theta_0\setm \{\xi_0\}.
\end{equation}
From the strong minimum principle 
(Theorem~\ref{thm-strong-min-princ}), 
together with~\eqref{eq-reg-lim}
and the fact that viscosity solutions are preserved
under multiplication and 
addition by constants,
we conclude that $0<u<1$ in $\Theta_0$.

To show that $u$ is a barrier,
it suffices to show that $\lim_{\Theta_0\ni\xi\to\xi_0} u(\xi) = 0$.
To this end, let $v(x,t)=u(2x,4t)$ and 
$\Theta_k=\Thetah_k\setm \clV$, where
\begin{equation}   \label{eq-def-Thetah-k}
\Thetah_k = \{(x,t)\in\R^{n+1}: (2^kx,4^kt)\in\Thetah\}, \quad k=0,1,\ldots,
\end{equation}
see Figure~\ref{Fig:1}.
Note that $\Theta_{k+1}\subset\Theta_k$, $k=0,1,\ldots$,
with identical boundaries near $\xi_0$,
and that 
\[
  K:= \overline{\bdry\Theta_1\setm \bdry V} \subset \overline{\Theta}_0
\]
is compact.
Hence, by continuity and \eqref{eq-reg-lim}, we see that
$\al_1:=\sup_K u <1$.
At the same time,
\[
   v=1\quad \text{on }K
\]
and $v$ attains the boundary values 
\[
v(x,t)=-4t=4u(x,t)
\quad \text{on }\bdry\Theta_1\cap\bdry V \setm \{\xi_0\}.
\]
The parabolic comparison principle
(Theorem~\ref{thm-parabolic-comp}), applied to
$\Theta^-_1 := \{(x,t)\in\Theta_1:t<0\}$,
implies that 
\begin{equation}   \label{eq-u-le-al-v}
u\le\al v \quad \text{in }\Theta^-_1,
\end{equation}
where $\al=\max\bigl\{\al_1,\tfrac14\bigr\}$.
Thus, if 
\[
A:=\limsup_{\Theta^-_1\ni\xi\to\xi_0} u(\xi),
\] 
then $0\le A\le\al A$, from which it follows that $A=0$.
At the same time, since $0<u<1$ in $\Theta_0$, we conclude from the continuity
of $u$ in $\Theta_0$ that 
\[
\liminf_{\Theta^-_1\ni\xi\to(x,0)} u(\xi) =u(x,0)>0
\]
whenever $x\ne0$. 
Thus, $u$ is a barrier for $\Theta^-_1$ at $\xi_0$ and $\xi_0$ is regular
for $\Theta^-_1$.
Theorem~\ref{thm-barrier-and-more} then implies that $\xi_0$ is regular
for $\Theta_0$ as well.
In particular, this means that $\lim_{\Theta_0\ni\xi\to\xi_0} u(\xi) = 0$.

We shall now show that $u$ is H\"older continuous at $\xi_0$.
From the first part of the proof we see that 
\[
\limsup_{\Theta_1\ni\xi\to(x,t)} u(\xi) \le \liminf_{\Theta_1\ni\xi\to(x,t)} \al v(\xi)
\]
for all $(x,t)\in\bdry\Theta_1$.
The elliptic comparison principle (Theorem~\ref{thm-elliptic-comp}) 
then implies that $u\le\al v$ in $\Theta_1$.
An iteration of this inequality then gives for 
$(x,t)\in\Theta_k\setm\Theta_{k+1}$ that
\begin{equation}   \label{eq-alpha-k}
u(x,t)\le \al u(2x,4t) \le \cdots \le \al^k u(2^kx,4^kt) \le \al^k
\le C (|x|+|t|^{1/2})^{\beta},
\end{equation}
where $\beta=-\log\al/{\log2}>0$. 
Since this holds for all $k=1,2,\ldots$, we see that $u$ is H\"older 
continuous at $\xi_0$.
\end{proof}

\begin{thm}
\label{thm-tusk}
\textup{(The tusk condition)}
If $\Theta$ satisfies the tusk condition at $\xi_0$ then $\xi_0$ is 
regular.
If moreover, $f:\bdry\Theta\to\R$ is bounded and H\"older continuous 
at $\xi_0$ then so is $\uP f$.
\end{thm}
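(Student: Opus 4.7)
My plan has two parts.

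For regularity, the strategy is to reduce to the setting of Lemma~\ref{lem-tusk-house-barrier} via scaling and then invoke the barrier characterization and locality. After translating I may assume $\xi_0=(0,0)$, and the parabolic rescaling $(x,t)\mapsto(x/\sqrt T,t/T)$ further reduces to $T=1$. Fix $R_0>|\xh|+R$ and let $\Thetah$ and $\Theta_0=\Thetah\setm\clV$ be as in Lemma~\ref{lem-tusk-house-barrier}. Since the tusk $V$ is open and $V\cap\Theta=\emptyset$, openness of $\Theta$ forces $\Theta\cap\clV=\emptyset$, so $\Theta\cap\Thetah\subset\Theta_0$. The barrier $w$ from Lemma~\ref{lem-tusk-house-barrier}, restricted to $\Theta\cap\Thetah$, is still positive and superparabolic, tends to $0$ at $\xi_0$, and has positive lower limits at every other boundary point of $\Theta\cap\Thetah$ (these points lie either in $\bdy\Theta_0\setm\{\xi_0\}$, where $w$ continuously attains its positive boundary values, or in the interior of $\Theta_0$, where $w>0$). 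Theorem~\ref{thm-barrier-and-more}\,\ref{k-barrier-char}--\ref{k-local} then give regularity of $\xi_0$ for $\Theta$.

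For the H\"older continuity, retain the reductions and subtract the constant $f(\xi_0)$ so that $f(\xi_0)=0$; write $|f(\xi)|\le L\,d(\xi)^\beta$ near $\xi_0$ with $d(\xi):=|x|+|t|^{1/2}$ and $M:=\|f\|_\infty$. The plan is to produce a bounded superparabolic $\Psi$ on $\Theta$ such that $\Psi\in\UU_f$, $-\Psi\in\LL_f$, and $\Psi(\xi)\le C\,d(\xi)^\gamma$ near $\xi_0$ for some $\gamma>0$; sandwiching $\uP f$ between $\pm\Psi$ will then yield the claimed H\"older estimate. The building block will be a power of the Lemma~\ref{lem-tusk-house-barrier} barrier $w$.

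The first and crucial step is to extract a two-sided H\"older bound on $w$. The iteration of Lemma~\ref{lem-tusk-house-barrier} already gives $w\le C_0\,d(\xi)^{\beta'}$ near $\xi_0$; rerunning the same dyadic argument in the opposite direction, with $\inf$ replaced by $\sup$, produces the companion inequality $w\ge\al'v$ on $\Theta_1^-$ using the strictly positive quantity $m_1:=\inf_K w>0$ (the positivity of which follows from the continuous attainment of the boundary value $1$ on the compact set $K$, together with the strong minimum principle). Iterating yields
\[
c\,d(\xi)^{\beta''}\le w(\xi)\le C_0\,d(\xi)^{\beta'}\quad\text{near }\xi_0\text{ in }\Theta_0,
\]
for some $\beta',\beta''>0$. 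A direct computation, valid in the viscosity sense because $w>0$ on $\Theta_0$ and $s\mapsto s^{1/\sigma}$ lifts $C^2$ test functions smoothly, gives for $\sigma\in(0,1)$
\[
(w^\sigma)_t-\Delta_p^N(w^\sigma)=\sigma w^{\sigma-1}(w_t-\Delta_p^N w)+\sigma(1-\sigma)(p-1)w^{\sigma-2}|\nabla w|^2\ge 0,
\]
so $w^\sigma$ is superparabolic on $\Theta_0$. Fixing $\sigma:=\min\{1,\beta/\beta''\}$ and choosing $A$ so large that $Ac^\sigma\ge L$ and $A(c')^\sigma\ge M$ (where $c'>0$ is a uniform lower bound of $w$ on $\Theta_0$ away from $\xi_0$), I would paste
\[
\Psi(\xi):=\begin{cases} A\,w(\xi)^\sigma, & \xi\in\Theta\cap\Thetah,\\ A, & \xi\in\Theta\setm\Thetah,\end{cases}
\]
into a superparabolic function on $\Theta$ via Lemma~\ref{lem-pasting}, noting that lower semicontinuity across $\bdy\Thetah\cap\Theta$ follows from $w\to 1$ on that interface. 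The lower H\"older bound on $w$ then gives $\Psi\ge L\,d(\xi)^\beta\ge|f|$ on $\bdy\Theta\cap\Thetah$ near $\xi_0$ and $\Psi\equiv A\ge M\ge|f|$ elsewhere, while the upper bound gives $\Psi(\xi)\le AC_0^\sigma d(\xi)^{\sigma\beta'}$ near $\xi_0$.

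The main obstacle is the lower H\"older bound $w\ge c\,d^{\beta''}$. Lemma~\ref{lem-tusk-house-barrier} supplies only the upper side of the scaling inequality, and establishing the matching lower side requires revisiting the boundary comparison on $\bdyp\Theta_1^-$ and replacing the estimate $\sup_K w=\al_1<1$ by $\inf_K w=m_1>0$; the positivity of $m_1$ rests on the strong minimum principle (Theorem~\ref{thm-strong-min-princ}) together with the continuous attainment of the positive boundary value $1$ on $K$. Once $w\ge\al' v$ on $\bdyp\Theta_1^-$ is in hand, the parabolic comparison principle (Theorem~\ref{thm-parabolic-comp}) propagates it to $\Theta_1^-$, and the dyadic iteration delivers the pointwise lower bound. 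The remaining ingredients (the viscosity superparabolicity of $w^\sigma$, the pasting, and the Perron comparison) are then routine.
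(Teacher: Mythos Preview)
Your regularity argument is essentially the paper's: restrict the Lemma~\ref{lem-tusk-house-barrier} barrier to $\Theta\cap\Thetah\subset\Theta_0$ (this is exactly Proposition~\ref{prop-restrict}) and then invoke locality from Theorem~\ref{thm-barrier-and-more}\,\ref{k-local}.

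For the H\"older part you take a genuinely different route. The paper never establishes a \emph{lower} H\"older bound on the barrier, nor the observation that $w^\sigma$ is superparabolic. Instead it uses, for each dyadic scale $k$, the rescaled barrier $u_k(x,t)=u(2^kx,4^kt)$, pasted with the constant $1$ outside $\Thetah_k$ via Lemma~\ref{lem-pasting}. Since $u_k\equiv 1$ on $\bdy\Theta\setm\Thetah_k$, the elementary bound $f\le C'2^{-\gamma k}+Mu_k$ holds on all of $\bdy\Theta$ (here $\gamma$ is the H\"older exponent of $f$), whence $\uP f\le C'2^{-\gamma k}+Mu_k$ in $\Theta$. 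The upper estimate $u\le\alpha^l$ on $\Thetah_l\setm\Thetah_{l+1}$ from~\eqref{eq-alpha-k}, applied with $l=k,k+1$, then gives $\uP f(x,t)\le C''(|x|+|t|^{1/2})^\beta$ on $\Theta\cap(\Thetah_{2k}\setm\Thetah_{2(k+1)})$ with $\beta=\min\{\gamma/2,-\log\al/2\log2\}$.

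Your route---a single comparison function $\Psi=Aw^\sigma$---is elegant and does work. The reversed iteration $w\ge\al'v$ with $\al'=\min\{m_1,\tfrac14\}$ goes through exactly as you describe (apply Theorem~\ref{thm-parabolic-comp} with the subparabolic $\al'v$ and superparabolic $w$), and iterating gives $w\ge m(\al')^k$ on $\Theta_k\setm\Theta_{k+1}$, hence $w\ge c\,d^{\beta''}$ with $\beta''=-\log\al'/\log2\ge2$. Your identity for $(w^\sigma)_t-\Delta_p^N(w^\sigma)$ is correct and lifts to the viscosity setting since, for a $C^2$ test function $\phi$ touching $w^\sigma$ from below at a point where $w^\sigma>0$, the function $\phi^{1/\sigma}$ is locally $C^2$ and touches $w$ from below. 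The trade-off: you front-load more machinery (the lower bound and the $w^\sigma$ calculation, the latter exploiting the specific structure of $\Delta_p^N$), whereas the paper's scale-by-scale comparison uses only what Lemma~\ref{lem-tusk-house-barrier} already provides and would transplant more readily to other equations. On the other hand, your approach produces a single explicit member of $\UU_f$, which is conceptually clean.
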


It follows from the proof below that if $f$ is bounded
and H\"older continuous 
at $\xi_0$ with H\"older exponent $\ga>0$,
and $\ga$ is small enough, then $\uP f$ is H\"older continuous 
at $\xi_0$ with H\"older exponent $\tfrac12\ga$.
How small $\ga$ has to be depends on the tusk.

In fact, replacing the scaling $(2^kx,4^kt)$ in~\eqref{eq-def-Thetah-k} 
  by $(b^kx,b^kt)$ with any $b>1$, and $l=k$, $k+1$
by $l=l_k,l_k+1,\ldots,l_{k+1}$, where
    $l_k=\lceil -\ga k\log b/\log\al \rceil$,
  in the proof below,
makes it possible to obtain H\"older continuity at $\xi_0$ with any exponent
$\beta<\ga$, at the cost of an increasing constant $C''$ 
in~\eqref{eq-Pf-C''}.
We leave the details to the interested reader.

\begin{proof}
We can assume that $\xi_0=(0,0)$.
The regularity of $\xi_0$ follows from Lemma~\ref{lem-tusk-house-barrier}
by means of the restriction property
(Proposition~\ref{prop-restrict}) and the fact that regularity is 
a local property, by Theorem~\ref{thm-barrier-and-more}.

To prove the H\"older regularity,
assume that $f:\bdry\Theta\to\R$ is H\"older continuous near $\xi_0$
with H\"older exponent $\ga$
and that $|f|\le M$ on $\bdry\Theta$. 
We can also assume that $f(0,0)=0$.
Let  $k\ge0$ be arbitrary, 
but such that $\Theta\cap \Thetah_k \subset \Theta_k$
and 
\[
  |f(x,t)|\le C (|x|+|t|^{1/2})^{\ga}
   \quad \text{on }\bdry\Theta\cap \Thetah_k,
\]
where $\Thetah_k$ and  $\Theta_k$ are as in~\eqref{eq-def-Thetah-k}.
Let $u$ be the barrier  from Lemma~\ref{lem-tusk-house-barrier}
and $\alp$ be the corresponding constant from~\eqref{eq-u-le-al-v}.
Set $u_k(x,t)=u(2^kx,4^kt)$ in $\Theta_k$.
Extend $u_k$ by 1 to $\Theta\setm \Theta_k$
and then by continuity to $\bdy \Theta$.
By the pasting Lemma~\ref{lem-pasting}, $u_k$ is superparabolic in $\Theta$, 
and provides us with a H\"older continuous barrier therein, 
in view of Lemma~\ref{lem-tusk-house-barrier}.

Since $u_k=1$ on $\bdry\Theta\setm \Thetah_k$, we have
$f \le C'2^{-\ga k} + M u_k$ everywhere on $\bdry\Theta$. 
Hence, by the definition of Perron solutions,
$\uP f \le C'2^{-\ga k} + M u_k$ in $\Theta$.
Thus, for $l\ge1$ and
$(2^kx,4^kt)\in \Theta \cap (\Thetah_{l} \setm \Thetah_{l+1})$
we conclude from~\eqref{eq-alpha-k} that
\[
\uP f(x,t) \le C'2^{-\ga k} + M u(2^kx,4^kt) \le C'2^{-\ga k} + M \al^{l}.
\]
In particular, with $l=k$ and $l=k+1$, i.e.\ for
$(x,t)\in \Theta \cap (\Thetah_{2k} \setm \Thetah_{2(k+1)})$,
\begin{equation}   \label{eq-Pf-C''}
\uP f(x,t) \le C'' (|x|+|t|^{1/2})^{\beta},
\end{equation}
where $\beta=\min \{\ga/2,-\log\al/2\log 2\}>0$. 
Applying the same argument to $-f$ and by considering all sufficiently 
large $k$ shows that $\uP f$ is H\"older continuous at $\xi_0$.
\end{proof}

As a direct consequence of the tusk condition we can now
deduce the following ``wedge'' condition for cylinders.

\begin{cor} \label{cor-cyl-p<2}
Let $G \subset \R^n$ be open and $\Theta=G_{t_1,t_2}$.
Let $(x_0,t_0) \in \bdy G \times  [t_1,t_2]$ be a  
point on the lateral 
boundary.
Assume that there is  $a>0$ and a vector $y \in \R^n$  such
that the cone
\begin{equation}   \label{eq-cone-at-G}
   \{x \in\R^n: (x-x_0) \cdot y > a |x-x_0|\} 
\end{equation}
belongs to the complement $\R^n \setm G$ of $G$.
Then $(x_0,t_0)$ is a regular boundary point for $\Theta$.
\end{cor}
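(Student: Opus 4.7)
The plan is to reduce to the tusk condition of Theorem~\ref{thm-tusk} by manufacturing a tusk at $(x_0,t_0)$ out of the exterior cone at $x_0$. Translating so that $(x_0,t_0)=(0,0)$, the cone hypothesis reads
\[
    C:=\{x\in\R^n : x\cdot y > a|x|\} \subset \R^n\setm G,
\]
and the fact that such a cone is nonempty forces $0<a<|y|$ (by Cauchy--Schwarz).

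Before constructing the tusk, dispose of the degenerate case $t_0=t_1$: then $\Thetaminus=\{(x,t)\in\Theta:t<t_0\}=\emptyset$, so Theorem~\ref{thm-barrier-and-more}\,\ref{k-Thetaminus} immediately gives regularity. Henceforth assume $t_0>t_1$.

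Now for the main case I will choose the tusk direction as $\xh=s\, y/|y|$ with $s>0$ large and $R>0$ small, selected so that
\[
    (s-R)|y| \;>\; a(s+R),
\]
which holds as soon as $s/R>(|y|+a)/(|y|-a)$; this is always achievable because $a<|y|$. Picking moreover $0<T<t_0-t_1$, I claim the resulting tusk
\[
    V = \{(x,t)\in\R^{n+1}: -T<t<0 \text{ and } |x-(-t)^{1/2}\xh|^2 < R^2(-t)\}
\]
satisfies $V\cap\Theta=\emptyset$. Indeed, writing $\tau=-t>0$ and $x=\tau^{1/2}\xh+w$ with $|w|<R\tau^{1/2}$, one gets
\[
    x\cdot y \;=\; \tau^{1/2}\, s|y| + w\cdot y \;\ge\; \tau^{1/2}(s-R)|y| \;>\; a\,\tau^{1/2}(s+R) \;\ge\; a|x|,
\]
so $x\in C\subset \R^n\setm G$ and therefore $(x,t)\notin \Theta=G\times(t_1,t_2)$ (the condition $t\in(t_1,t_2)$ is automatic from $-T<t<0\le t_2$ and $T<t_0-t_1$, but irrelevant once $x\notin G$).

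The tusk condition at $(x_0,t_0)$ is therefore verified, and Theorem~\ref{thm-tusk} yields regularity. No serious obstacle arises: the one non-trivial step is the calibration of $\xh$ and $R$ to keep the parabolic tusk within the spatial cone, and this amounts to the elementary inequality $(s-R)|y|>a(s+R)$ displayed above.
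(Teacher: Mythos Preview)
Your argument is correct and is precisely the intended ``direct consequence of the tusk condition'' the paper alludes to without spelling out: place a thin tusk along the axis of the exterior spatial cone so that each time-slice of the tusk is a ball contained in that cone, and then invoke Theorem~\ref{thm-tusk}. Your calibration $(s-R)|y|>a(s+R)$ is exactly the right inequality, and the separate treatment of $t_0=t_1$ via Theorem~\ref{thm-barrier-and-more}\,\ref{k-Thetaminus} is fine (though unnecessary, since in that case any tusk at $\xi_0$ lies entirely in $\{t<t_1\}$ and is automatically disjoint from $\Theta$).
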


\begin{remark}    \label{rem-cyl-p<2} 
The proofs of Lemma~\ref{lem-tusk-house-barrier} and Theorem~\ref{thm-tusk}
reveal that the tusk $V$ therein can be replaced by the following union
of geometrically spaced ellipses,
\[
E_k=\biggl\{(x,t)\in\R^{n+1}: \biggl( \frac{|x-x_k|}{a_k} \biggr)^2 
              + \biggl( \frac{t-t_k}{b_k} \biggr)^2 < 1\biggr\},
\quad k=1,2,\ldots,
\]
where $x_k=q^k\hat{x}$, $a_k=aq^k$, $b_k=bq^{2k}$ and $t_k=-cq^{2k}$
for some $a,b,c>0$, $\hat{x} \in \R^n$ and $0<q<1$.
More precisely, assuming that 
$\Theta\cap E_k =\emptyset$, $k=1,2,\ldots$,
we have that $\xi_0=(0,0)$ is regular for $\Theta$, with a H\"older
continuous barrier.
Moreover, H\"older continuity of the boundary data $f$ at $\xi_0$ implies
H\"older continuity of $\uP f$ at $\xi_0$.

Similarly,
the ``wedge'' condition
\eqref{eq-cone-at-G}
in Corollary~\ref{cor-cyl-p<2} can be replaced by the requirement that
\[
G \cap B_k = \emptyset, \quad k=1,2,\ldots,
\]
where 
$x_k$ and $a_k$ are as above, and
$B_k=\{x \in \R^n : |x-x_k|<a_k\}$. 
\end{remark}

For the non-normalized \p-parabolic equation {(with $p>1$)} 
it was shown in Kilpel\"ainen--Lindqvist~\cite{KiLi96}
and Bj\"orn--Bj\"orn--Gianazza--Par\-vi\-ain\-en~\cite[Theorem~3.9]{BBGP}
that a point $(x_0,t_0)$ on the lateral boundary 
of a cylinder $G_{t_1,t_2}\subset\R^{n+1}$ is regular if and only if
$x_0$  is regular for \p-harmonic functions
with respect to $G\subset\R^n$.
The main result in 
Banerjee--Garofalo~\cite[Theorem~1.1]{banerjeeg15}
says that the same equivalence  holds for the normalized \p-parabolic equation
provided that $p \ge 2$.
However, due to the  
power $2-p$ in $\Delta_p^N u = |\nabla u|^{2-p} \Delta_p u$, 
which leads to the singular right-hand side in 
$\Delta_p u=-|\nabla u|^{p-2}$, they did not cover the case $p<2$.
Corollary~\ref{cor-cyl-p<2} and Remark~\ref{rem-cyl-p<2} are currently the
best known results about boundary regularity for cylinders when $p<2$.
Note, however, that the necessity part of the proof 
of~\cite[Theorem~1.1]{banerjeeg15}
(i.e.\ from the regularity of $(x_0,t_0)$ to the regularity of $x_0$)
holds true also for $p<2$.

We end this section by
deducing the full exterior ball condition.

\begin{prop}
\textup{(Exterior ball condition)}
\label{prop:exterior-ball-condition}
Let $\xi_0=(x_0,t_0)\in\partial\Theta$.
Suppose that there exists a  ball $B=B(\xi_1,R_1)$, $\xi_1=(x_1,t_1)$,
such that $B \cap \Theta = \emptyset$ and
$\xi_0 \in \bdy B \cap \bdy \Theta$.
If $x_1\ne  x_0 $, or if $\xi_0$ is the north pole
of $B$ \textup{(}i.e.\ $\xi_1=(x_0,t_0-R_1)$\textup{)},
then $\xi_0$ is regular with respect to $\Theta$.
\end{prop}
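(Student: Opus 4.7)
The case $x_1 \ne x_0$ has already been handled by the preliminary version Lemma~\ref{lem:exterior-ball-condition-first}, so the only thing left is the north-pole case $\xi_1=(x_0,t_0-R_1)$ with an \emph{arbitrary} radius $R_1>0$ (Lemma~\ref{lem:exterior-ball-condition-first} required $R_1>n+p-2$). The plan is to avoid building another explicit barrier and instead show that a north-pole exterior ball automatically supplies an external tusk at $\xi_0$; the conclusion then follows from the tusk condition (Theorem~\ref{thm-tusk}).

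After translating, we may assume $\xi_0=(0,0)$ and $\xi_1=(0,-R_1)$. A point $(x,t)$ with $t<0$ lies in $B(\xi_1,R_1)$ if and only if $|x|^2+(t+R_1)^2<R_1^2$, which simplifies to
\[
|x|^2 < (-t)\bigl(2R_1-(-t)\bigr).
\]
For $0<-t\le R_1$ the right-hand side is at least $R_1(-t)$, so the simpler inequality $|x|^2<R_1(-t)$ forces $(x,t)\in B\subset\R^{n+1}\setminus\Theta$. Taking $\hat x=0$, $R^2=R_1$ and $T=R_1$ in Definition~\ref{def-tusk}, the set
\[
V=\{(x,t)\in\R^{n+1}: -T<t<0,\ |x-(-t)^{1/2}\hat x|^2<R^2(-t)\}
\]
is therefore a tusk at $\xi_0$ satisfying $V\cap\Theta=\emptyset$.

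Hence $\Theta$ satisfies the tusk condition at $\xi_0$, and Theorem~\ref{thm-tusk} yields that $\xi_0$ is regular with respect to $\Theta$.

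There is no real obstacle here beyond noticing the geometric fact that the ``tip'' of a ball at its north pole, viewed from below in space-time, is parabolically shaped with zero horizontal offset and hence is (or contains) a tusk with $\hat x=0$. All the analytic work—the weak Harnack inequality, the parabolic comparison principle, the iterative shrinking argument, and the barrier characterization—has already been absorbed into Theorem~\ref{thm-tusk}, so the proof reduces to this one short geometric verification.
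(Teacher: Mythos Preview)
Your proof is correct and follows exactly the same route as the paper: the non-north-pole case is deferred to Lemma~\ref{lem:exterior-ball-condition-first}, and the north-pole case is reduced to the tusk condition (Theorem~\ref{thm-tusk}). The paper merely asserts that ``the north pole case follows directly from the tusk condition'' without writing out the elementary inclusion $V\subset B$, whereas you supply that short computation explicitly; otherwise the arguments are identical.
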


Note that this result is a direct corollary of the tusk
condition (Theorem~\ref{thm-tusk}), since the exterior ball condition
is always a stronger requirement than the tusk condition.
Nevertheless, we only need to directly appeal to the tusk condition
for the north pole case.

\begin{proof}
Apart from the north pole case this follows from
Lemma~\ref{lem:exterior-ball-condition-first},
while the north pole case follows directly
from the tusk condition (Theorem~\ref{thm-tusk}).
\end{proof}

Note that the well-known irregularity of nonlateral last points in cylinders 
shows that an exterior ball touching at the south pole
does not guarantee regularity,
which leads us directly into the topic of the next section.

\section{The \texorpdfstring{Petrovski\u\i}{Petrovskii}   criterion}
\label{sect-Petr}

In this section, we consider the regularity of the last point of
a domain.
Nonlateral last points in cylinders are known to be irregular.
On the other hand, last points of paraboloids are regular
by e.g.\ the tusk condition.
The idea in the Petrovski\u\i\   criterion is to
find a sharper regularity condition for the shape of the domain 
near a last point. 
This condition has also interesting
consequences.  Just as for the heat equation, Theorem~\ref{thm-Petr}, 
together with a simple
scaling argument, shows that regularity of a boundary point for
the multiplied equation
\[
 a  u_t -\Delta_p^N u=0, 
\quad \text{with } a >0,
\]
depends on $a$.

\begin{thm} \label{thm-Petr}
Let 
\[
   \Theta :=\{(x,t) : |x|^2 <  A|t| \log {\lt}
   \text{ and } -\tfrac13 < t < 0\},
\]
where $A>0$.
Then $\xi_0=(0,0)$ is regular if and only if
$A \le 4(p-1)$.
\end{thm}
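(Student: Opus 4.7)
The proof splits into sufficiency ($A\le 4(p-1)\Rightarrow$ regular) and necessity ($A>4(p-1)\Rightarrow$ irregular), both handled via the barrier characterization (Theorem~\ref{thm-barrier-and-more}\,\ref{k-barrier-char}), with superparabolicity verified by Proposition~\ref{prop-classical}. The key reduction is that on a radial function $u(r,t)$ with $r=|x|$ the operator becomes the linear non-degenerate parabolic operator
\[
    u_t-\Delta_p^N u = u_t - (p-1)u_{rr} - \frac{n-1}{r}u_r.
\]
The natural similarity variable is $\sigma=|x|^2/(4(p-1)|t|)$; on the boundary curve $|x|^2=A|t|\log|\log|t||$ one has $\sigma=(A/(4(p-1)))\log|\log|t||$, so the threshold $A=4(p-1)$ is exactly the place where $\sigma$ matches $\log|\log|t||$ on $\bdy\Theta$, which explains the sharp constant. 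Plugging $u=|t|^\alpha F(\sigma)$ into the radial equation leads to the confluent hypergeometric equation
\[
    \sigma F''(\sigma)+(\mu-\sigma)F'(\sigma)+\alpha F(\sigma)=0,
    \qquad \mu=\frac{n+p-2}{2(p-1)},
\]
so explicit radial solutions are available; in particular $F_1(\sigma)=1-\sigma/\mu$ yields the exact solution $u_1(x,t)=|t|-|x|^2/(2(n+p-2))$.

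For sufficiency, the plan is to build a Petrovski\u\i-type barrier of the form
\[
    w(x,t)=|t|^\alpha F(\sigma) + |t|^\alpha G(\sigma)\log|\log|t|| + \text{l.o.t.},
\]
where $F$ is a suitable Kummer solution (e.g.\ $F_1$ or its transcendental partner) and $G$ is a slowly varying correction chosen so that (i) $w\to 0$ as $(x,t)\to(0,0)$, (ii) $w$ is bounded below by a positive constant on the remainder of $\bdy\Theta$, and (iii) $w_t-\Delta_p^N w\ge 0$ throughout $\Theta$. The inequality $A\le 4(p-1)$ is consumed precisely in (iii): it keeps the log-log correction subdominant to the leading Kummer piece on the boundary layer $\sigma\sim \log|\log|t||$, so that the sign of $w_t-\Delta_p^N w$ is preserved there. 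The resulting $w$ is smooth with $\nabla w\ne 0$ off the $t$-axis, and radial convexity in $r$ yields $D^2w\ge 0$ on the axis, so Proposition~\ref{prop-classical} applies and $w$ is a barrier; Theorem~\ref{thm-barrier-and-more}\,\ref{k-local}--\ref{k-barrier-char} then gives regularity. The critical equality $A=4(p-1)$ is treated either by taking a sharper correction or by approximating with slightly subcritical $A'<A$ and passing to the limit, using lower semicontinuous regularity (Proposition~\ref{prop-essliminf-reg}).

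For necessity, the plan is to produce, in a neighbourhood of $\xi_0=(0,0)$, a radial subparabolic function $v$, built from the same confluent hypergeometric family but tuned in the opposite direction, with $v\le 0$ on $\bdy\Theta$ near $\xi_0$ and $\liminf_{(x,t)\to\xi_0} v(x,t)>0$; this is possible exactly when $A>4(p-1)$. Then choose $f\in C(\bdy\Theta)$ with $f(\xi_0)=0$ and $f\le v|_{\bdy\Theta}$ near $\xi_0$: the elliptic and parabolic comparison principles (Theorems~\ref{thm-elliptic-comp} and~\ref{thm-parabolic-comp}) force $\uP f\le v$-type estimates near $\xi_0$ which are incompatible with $\uP f\to f(\xi_0)=0$, ruling out regularity. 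Equivalently, any putative barrier at $\xi_0$ would by the parabolic comparison principle dominate a small multiple of $v$ and could therefore not tend to $0$, contradicting Theorem~\ref{thm-barrier-and-more}\,\ref{k-barrier-char}. The \textbf{main obstacle} in both directions is isolating the correct asymptotic ansatz so that the term with coefficient $A/(4(p-1))-1$ appears with the right sign on the critical curve; once the ansatz is fixed, the verification of $(w$ or $v)_t-\Delta_p^N(w\text{ or } v)$ reduces, through the radial symmetry, to a one-dimensional ODE inequality that can be checked directly.
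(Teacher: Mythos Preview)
Your proposal is a plan rather than a proof, and the plan has a structural gap: the ansatz $w=|t|^\alpha F(\sigma)+|t|^\alpha G(\sigma)\log\lt+\text{l.o.t.}$ with power-of-$|t|$ prefactors cannot produce a barrier with the required boundary behaviour. Any such $w$ vanishes as $t\to 0^-$ uniformly on the closure of $\Theta$ (since on $\bdy\Theta$ one has $\sigma\sim\tfrac{A}{4(p-1)}\log\lt$, so even if $F$ has Kummer-type growth $e^\sigma$, the factor $|t|^\alpha$ kills $e^\sigma\sim\lt^{A/4(p-1)}$), and hence $w$ cannot stay bounded away from $0$ on $\bdy\Theta\setm\{\xi_0\}$ near $\xi_0$ while tending to $0$ at $\xi_0$. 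The correct scale is powers of $\lt$, not of $|t|$: the paper takes the explicit function
\[
u(x,t)=-\lt^{-a-1}\,e^{|x|^2/k|t|}+2\lt^{-a},
\qquad k=4(p-1),\ a=\tfrac{n+p-2}{k},
\]
checks directly that $u>0$ in $\overline{\Theta}\setm\{\xi_0\}$ when $A\le k$, that $u\to0$ at $\xi_0$, and that $u_t-\Delta_p^N u\ge0$ where $\nabla u\ne0$, with $D^2u$ \emph{negative} definite on the axis $x=0$ so Proposition~\ref{prop-classical} imposes no condition there. Your claim that ``radial convexity in $r$ yields $D^2w\ge0$ on the axis, so Proposition~\ref{prop-classical} applies'' reads the proposition backwards: if $D^2w\ge0$ at a critical point you must verify $w_t\ge0$, which in fact fails for the paper's barrier near $t=0$; it is the negative-definiteness that rescues the argument.

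Two further points. First, handling the critical case $A=4(p-1)$ by ``approximating with slightly subcritical $A'<A$ and passing to the limit'' does not work: regularity is not known to be stable under such domain approximations, and Proposition~\ref{prop-essliminf-reg} concerns pointwise lsc-regularity of a fixed supersolution, not limits of barriers on varying domains. The paper's barrier covers $A=4(p-1)$ directly. Second, your necessity sketch has the comparison inequalities inverted: with $v$ subparabolic and $v\le0$ on $\bdy\Theta$ near $\xi_0$, you need $\limsup v\le f$ on the boundary (i.e.\ $v\le f$, not $f\le v$) to place $v$ in the lower class and force $\uP f\ge\lP f\ge v$; the paper does exactly this with an explicit ``irregularity barrier'' $u=-\lt^{-a-1}e^{|x|^2/k|t|}+h(t)$ for a suitably chosen $k\in(4(p-1),A)$, which satisfies $u\to-1$ along $\bdy\Theta$ and $u(0,t)\to0$, and then compares with the Perron solution of its own boundary trace.
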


In view of  Theorem~\ref{thm-barrier-and-more}\,\ref{k-local} the constant
$-\tfrac13$ can be replaced by any other negative constant,
but here it has been chosen so that $\log {\lt} > 0$ for all such~$t$.

\begin{proof}
We first consider the case when $0 < A \le 4(p-1)$, in which
case we shall show regularity by constructing a barrier.
Let
\begin{alignat*}{2}
   k &= 4(p-1), &\quad  f(t)&=  \lt^{-a-1}, \\
   a & = \frac{n+p-2}{k}>0, &\quad  h(t)& =2 \lt^{-a},
\end{alignat*}
where we only consider $t \in \bigl[-\tfrac{1}{3},0\bigr)$ throughout
the proof.  
We see that
\begin{equation}   \label{eq-nonpos-f'-h'}
   f'(t) = - \frac{a+1}{|t| \lt^{a+2}}  <0
\quad \text{and} \quad
   h'(t) = - \frac{2a}{|t| \lt^{a+1}}  =-\frac{2a}{|t|}f(t) <0.
\end{equation}
We want to show that 
\[
u(\xi)=-f(t)e^{|x|^2/k|t|} + h(t)
\]
is a barrier at $\xi_0$,
where we write $\xi=(x,t)$ from now on.

First, note that $u \in C^2(\overline{\Theta} \setm \{\xi_0\})$ and
it is positive if and only if 
\[
   2\lt  >  e^{|x|^2/k|t|},
\] 
i.e.\ if and only if
\[
  |x|^2 < k|t| \log {\lt} + k|t| \log 2,
\]
which holds in $\overline{\Theta} \setm \{\xi_0\}$ since $A \le k$.
Moreover,
\[
    \lim_{\Theta \ni \xi \to \xi_0} u(\xi)=0.
\]
It remains to show that $u$ is superparabolic in $\Theta$, to conclude
that $u$ is a barrier.
Since $u \in C^2(\Theta)$ we will show this
using Proposition~\ref{prop-classical}.
To this end, we see that
\begin{align*}
   \nabla u(\xi) & = - \frac{2f(t)}{k|t|} e^{|x|^2/k|t|} x, \\
   |\nabla u(\xi)|^{p-2} \nabla u(\xi) 
        & = -  \biggl(\frac{2f(t)}{k|t|}\biggr)^{p-1} e^{(p-1)|x|^2/k|t|} 
   |x|^{p-2} x, \\
   \Delta_p u(\xi) & = -  \biggl(\frac{2f(t)}{k|t|}\biggr)^{p-1} e^{(p-1)|x|^2/k|t|} 
   \biggl( \frac{2(p-1)}{k|t|} |x|^p + (n+p-2)|x|^{p-2}\biggr).
\end{align*}
Thus,  if in addition $\nabla u (\xi) \ne 0$, we have 
\[ 
   \Delta_p^N u(\xi)  = - \frac{2f(t)}{k|t|} e^{|x|^2/k|t|} 
   \biggl( \frac{2(p-1)}{k|t|} |x|^2 + n+p-2\biggr).
\]
Also 
\begin{equation}  \label{eq-u-t}
    u_t(\xi)= e^{|x|^2/k|t|} 
    \biggl( 
    -f'(t) - f(t) \frac{|x|^2}{kt^2} + h'(t) e^{-|x|^2/k|t|} 
    \biggr), 
\end{equation}
which together yields,
still provided that  $\nabla u (\xi) \ne 0$,
\begin{align*}
  u_t(\xi) - \Delta_p^N u(\xi) 
  &= e^{|x|^2/k|t|} \biggl( -f'(t)
    - \frac{f(t)|x|^2}{k^2t^2} (k- 4(p-1))  \\
    & \quad + \frac{2f(t)(n+p-2)}{k|t|}
      + h'(t) e^{-|x|^2/k|t|} \biggr). 
\end{align*}      
Using \eqref{eq-nonpos-f'-h'} and that $k=4(p-1)$,
we then obtain that 
\begin{align*}
u_t(\xi) - \Delta_p^N u(\xi) 
 & \ge   e^{|x|^2/k|t|}  \biggl(
    \frac{2f(t)(n+p-2)}{k|t|}
      + h'(t) 
    \biggr) \\
    & = e^{|x|^2/k|t|} f(t) 
    \biggl( \frac{2(n+p-2)}{k|t|}
      - \frac{2a}{|t|} 
    \biggr)  =0.
\end{align*}

When $\nabla u(x,t)=0$, we see that $x=0$. 
Moreover,
\[
\partial_i \partial_j  u(x,t) = - \frac{2f(t)}{k|t|} e^{|x|^2/k|t|}
    \biggl(\de_{ij} + \frac{2x_ix_j}{k|t|} \biggr),
\]
so $D^2 u(0,t)$
is negative definite, as $f(t)$ and $k$ are positive.
The requirements in Proposition~\ref{prop-classical} are thus met
  (even though $u_t(0,t)<0$ for $t$ close to $0$),
so $u$ is superparabolic in $\Theta$.
Hence it is a barrier, and Theorem~\ref{thm-barrier-and-more}
shows that $\xi_0$ is regular if $A \le 4(p-1)$.

\medskip

Now we turn to the case $A > 4 (p-1)$, in which case we shall show irregularity
by producing a so-called ``irregularity barrier''. 
To be more precise, $u$ is  an ``irregularity barrier'' 
if it can be used as a comparison function to show that 
the upper Perron solution does not attain its boundary values 
continuously at $\xi_0=(0,0)$. 
We will construct $u$ such that 
\begin{enumerate}
\renewcommand{\theenumi}{\textup{(\roman{enumi})}}
\item $u$ is subparabolic in 
$\Theta'=U \cap \Theta$  for some open neighbourhood $U$
  of $\xi_0$;
\item $u$ has an extension to $\overline{\Theta}'$ so 
that both $u|_{\overline{\Theta}' \setm \{\xi_0\}}$ and $u|_{\bdy \Theta'}$
are continuous;
\item $\lim_{t \to 0-}  u(0,t)=0 > u(\xi_0)$.
\end{enumerate}

First choose $k$ such that
\[
      4(p-1) < k< A
\]
and let 
\[
     a=\frac{A}{k}-1 >0
     \quad  \text{and} \quad
     b=\frac{4(n+p-2)}{k}>0.
\]
Note that the parameters $a$ and $k$, as well as the function $h$ below, are
  not the same as in the first part of the proof.
The functions
\[
f(t)=  \lt^{-a-1}
\quad \text{and} \quad
h(t)=\frac{2b}{a\lt^{a/2}},
\]
are positive for $-\tfrac13 \le  t < 0$,
considered in this proof.
Moreover,  
\[
   f'(t) = - \frac{f(t)}{|t|} \frac{a+1}{\lt} 
\quad \text{and} \quad
h'(t) = - \frac{b}{|t|\lt^{1+a/2}} 
= - \frac{bf(t)}{|t|} \lt^{a/2}.
\]
We want to show that 
\[
u(\xi)=-f(t)e^{|x|^2/k|t|} + h(t) 
\]
is an ``irregularity barrier'' for small enough $t$.
We first observe that 
\[
    \lim_{t \to 0-}  u(0,t)=0,
\]
while if $\xi \in \bdy \Theta$ and $-\tfrac13 < t<0$, then 
since $-a-1+A/k=0$,
\begin{align*}
    u(\xi) &=-\lt^{-a-1} e^{(A/k)\log \ltsub} + h(t)\\
       &= -\lt^{-a-1+A/k} + h(t) =h(t)-1  \to -1, \quad \text{as }t \to 0.
\end{align*}
We will show that  
$u$ is subparabolic in 
\[
\Theta':=\{(x,t) \in \Theta : t > -\tau\},
\]
for some  $0 < \tau< \tfrac{1}{3}$ which will be determined later.
For now, we 
take this fact for granted and show how it  implies that 
$u$ is an ``irregularity barrier'' in $\Theta'$,
and how this yields the irregularity of $\xi_0$.
Let 
\[
 \ut(\xi) = \begin{cases}
    u(\xi), & \text{if } \xi \in \bdy \Theta' \setm \{\xi_0\}, \\
    -1, & \text{if } \xi=\xi_0.
     \end{cases}
\]
Observe that $\ut \in C (\bdy \Theta')$, and
let $v \in \UU_{\ut}$.
Then, 
\[
   \limsup_{\Theta' \ni \zeta  \to \xi} u(\zeta)=u(\xi) 
   \le \liminf_{\Theta' \ni \zeta  \to \xi} v(\zeta)
   \quad \text{for all } \xi \in \bdy \Theta' \setm \{\xi_0\}.
\]
Hence, 
by Theorem~\ref{thm-parabolic-comp}, $u \le v$ in $\Theta'$,
and thus $u \le \uP_{\Theta'} \ut$  in $\Theta'$.
Therefore
\[
    \limsup_{\Theta' \ni \xi \to \xi_0} \uP_{\Theta'} \ut(\xi) 
    \ge      \lim_{t \to 0-}  u(0,t)
    =0 > \ut(\xi_0),
\]
showing that 
$\xi_0$ is irregular with respect to $\Theta'$.
By Theorem~\ref{thm-barrier-and-more}\,\ref{k-local}, 
$\xi_0$ is irregular also with respect to $\Theta$.

It remains to verify that $u$ is subparabolic in $\Theta'$, 
if $\tau$ is small enough.
As in the first part, we get, provided that $\nabla u(\xi) \ne 0$, 
\begin{align*}
  u_t(\xi) - \Delta_p^N u(\xi) 
  &= e^{|x|^2/k|t|}  
    \biggl(
    -f'(t) -  \frac{f(t)|x|^2}{k^2t^2} (k- 4(p-1))  \\
    & \quad + \frac{2f(t)(n+p-2)}{k|t|}
      + h'(t) e^{-|x|^2/k|t|} 
    \biggr) \\
  & = \frac{e^{|x|^2/k|t|} f(t)}{|t|}
    \biggl(
    \frac{a+1}{\lt} -  \frac{c|x|^2}{k^2|t|}  
     + \frac{b}{2} 
       - b e^{-|x|^2/k|t|} \lt^{a/2}
    \biggr),
\end{align*}
where $c=k- 4(p-1)>0$.
We will need three 
conditions on $\tau$.
The first is that it is so small that
\[
    \frac{ a+1}{\lt} \le  \frac{b}{2}
    \quad \text{for } -\tau < t <0,
\]
which we  assume from now on.
To show that $u_t - \Delta_p^N u\le 0$, it therefore suffices to verify that
  \begin{equation}   \label{eq-verify-bracket}
   \frac{c|x|^2}{k^2|t|} + b e^{-|x|^2/k|t|} \lt^{a/2} \ge b.
  \end{equation}
If $|x|^2 \le \tfrac12 ak |t| \log \lt$, then  
\[
b e^{-|x|^2/k|t|} \lt^{a/2} \ge b e^{-\frac12 a \log \ltsub} \lt^{a/2} = b,
\]
while if $|x|^2 > \tfrac12 ak |t| \log \lt$, then 
\[
\frac{c|x|^2}{k^2|t|} \ge \frac{ac}{2k} \log \lt \ge b
    \quad \text{for } -\tau < t <0,
\] 
provided that $\tau$ is small enough.
Hence, \eqref{eq-verify-bracket} holds in both cases.

Moreover, if $\nabla u(x,t)=0$, then $x=0$, and  from~\eqref{eq-u-t}
we see that
\[
u_t(0,t) = h'(t)-f'(t)
  =\frac{f(t)}{|t|} \biggl( \frac{a+1}{\lt} - b\lt^{a/2} \biggr) <0
    \quad \text{for } -\tau < t <0,
  \]
provided that $\tau$ is small enough.

Hence $u$ is subparabolic in $\Theta'$, by Proposition~\ref{prop-classical},
if $\tau$
is chosen small enough, which concludes the proof.
\end{proof}

\end{document}